\numberwithin{equation}{section} 
\numberwithin{figure}{section} 
\theoremstyle{plain}
\theoremstyle{plain}
\newtheorem{thm}{Theorem}
  \theoremstyle{plain}
  \newtheorem{prop}{Proposition}
\def\C{\mathbb C}
\newcommand{\abs}[1]{| #1 |}
\def\B{\mathbb B}
\def\bcases{\begin{cases}}
\newcommand{\br}[1]{\left(#1\right)}
\newcommand{\bk}[1]{\left[#1\right]}
\def\ecases{\end{cases}}
\newcommand{\cl}{\overline}
\newcommand{\dd}{\delta}
\newcommand{\e}{\epsilon}
\newcommand{\im}{\text{\rm Im}\,}
\newcommand{\inner}[1]{\left\langle #1 \right\rangle}
\newcommand{\norm}[1]{\left\| #1\right\|}
\newcommand{\p}{\partial}
\newcommand{\R}{\mathbb R}
\newcommand{\set}[1]{\left\{ #1\right\}}
\def\sm{\setminus}
\newcommand{\Vol}{\operatorname{Vol}}
\newcommand{\W}{\Omega}
\newcommand{\Z}{\mathbb Z}
\newtheorem{main theorem}{Main Theorem}
\newtheorem{corollary}{Corollary}
\newtheorem{lemma}{Lemma}
\newtheorem{problem 1}{Problem 1}
\newtheorem{problem 2}{Problem 2}
\newtheorem{problem 3}{Problem 3}
\theoremstyle{definition}
\newtheorem{defn}{Definition}
\newtheorem{remark}{Remark}
\newcommand{\bea}{\begin{eqnarray*}}
\newcommand{\eea}{\end{eqnarray*}}
\newcommand{\be}{\begin{equation}}
\newcommand{\ee}{\end{equation}}
\begin{document}

\title[Comparison of Invariant Metrics]{Comparison of Invariant Metrics}

\author{Hyunsuk Kang, Lina Lee, Crystal Zeager}





\thanks{ The third author is
supported by NSF RTG grant DMS-0602191.}


\today
\begin{abstract}
We estimate the boundary behavior of the Kobayashi metric on $\C\sm\set{0,1}$. We also compare the Bergman metric on the ring domain in $\C^{2}$ to the Bergman metric on the ball.
\end{abstract}

\maketitle

\section{Introduction}

The Kobayashi metric and the Carath\'eodory metric are the generalizations of the Poincar\'e metric in higher dimensions and it is known that the Kobayashi metric is the largest and the Carath\'eodory metric is the smallest among such metrics, i.e., metrics that coincide with the Poincar\'e metric on the unit disc in $\C$ and satisfy the non-increasing property under holomorphic mappings. For example, the Sibony metric and the Azukawa metric, whose definitions can be found in section 2, are both examples of such metrics and we always have the following inequality: Carath\'eodory metric $\le$ Sibony metric $\le$ Azukawa metric $\le$ Kobayashi metric. 

On the other hand, the Bergman metric is neither equal to the Poincar\'e metric on the unit disc in $\C$, nor does it satisfy the non-increasing property under holomorphic mappings. Hence the inequality between the Bergman metric and the Carath\'eodory metric or the Kobayashi metric is not obvious. It was K.T. Hahn who proved that the Bergman metric is always greater than or equal to the Carath\'eodory metric in \cite{KH}. Diederich-Forn\ae ss \cite{DF} and Diederich-Forn\ae ss-Herbort \cite{DFH} showed that there is no simple inequality that always holds between the Kobayashi metric and the Bergman metric.

\subsection{Comparison of metrics in $\C$}
The first question we ask is regarding the Kobayashi, Carath\'eodory, Azukawa, and Sibony metrics. As stated above, one can always find a simple inequality between these metrics. Then one may wonder how differently these metrics can behave. That leads to the following question: can one find a domain where one metric does not vanish anywhere but another metric vanishes at some point? In other words, can one find a domain, where the hyperbolicity of these metrics are not equivalent? 

The domain $\C\sm\set{0,1}$ is an example of such a domain. The Kobayashi metric does not vanish everywhere since its covering space is the unit disc and the Kobayashi metric does not vanish on the unit disc. The Carath\'eodory metric, on the other hand, vanishes everywhere, since there does not exist a bounded holomorphic function on $\C\sm\set{0,1}$. The Sibony and Azukawa metrics are also identically zero on $\C\sm\set{0,1}$ because the bounded plurisubharmonic functions on $\C\sm\set{0,1}$ are all constant. 

A higher dimensional example of the same phenomenon is given by $ \W \times \Delta^n $ where  $ \W = \C \sm \set{0,1} $. It is easy to check that this domain is Kobayashi hyperbolic using the known formula for the Kobayashi metric on product domains \cite[p. 106]{Pflug}.  But this domain is not Carath\'eodory, Azukawa, or Sibony hyperbolic because it contains copies of $ \W $.

We then study how the Kobayashi metric behaves as the point approaches one of the punctures and obtained the following theorem: 
\begin{thm} \label{MainThmHyp}
Let $\W=\C\setminus\{0,1\}$, and let $\text{dist}(p,0)=\delta,\xi=1$. Then for $ \delta>0 $ sufficiently small we have 
\begin{equation}
F_{K}^{\W}(p,\xi)\approx\frac{1}{\delta \log{1/\dd}},
\end{equation}
where $F_K^\W(p,\xi)$ denotes the Kobayashi metric on $\W$ at the point $p$ in the direction $\xi$.
\end{thm}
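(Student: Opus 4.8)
The plan is to use that $\W=\C\sm\set{0,1}$ is uniformized by the unit disc, so that $F_K^\W$ coincides with the quotient Poincar\'e metric $\rho_\W$ (the complete metric of constant negative curvature), and then to bound $\rho_\W$ near the puncture $0$ from above and below by separate arguments. Since $\approx$ hides multiplicative constants, the normalization of the Poincar\'e metric is immaterial; I will use the curvature $-1$ normalization, for which the punctured disc $\D^*=\set{0<\abs z<1}$ carries the metric $\rho_{\D^*}=\abs{dz}\big/\br{\abs z\log(1/\abs z)}$, as one reads off from the covering $\mathbb H\to\D^*$, $\tau\mapsto e^{i\tau}$.

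\emph{Upper bound.} The inclusion $\D^*\hookrightarrow\W$ is holomorphic, so the non-increasing property of the Kobayashi metric gives $F_K^\W(p,\xi)\le F_K^{\D^*}(p,\xi)$ for every $p\in\D^*$. Taking $\abs p=\dd$ and $\xi=1$, the formula for $\rho_{\D^*}$ gives $F_K^\W(p,1)\le 1\big/\br{\dd\log(1/\dd)}$ up to a constant, which is the asserted upper bound. This step is routine.

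\emph{Lower bound.} This is the substantive part, and the step I expect to be the main obstacle: one must show that $\rho_\W$ blows up \emph{no faster} than the punctured-disc metric near $0$, which reflects the fact that the end of $\W$ at $0$ is a cusp (a parabolic puncture) rather than a funnel. I would argue via the elliptic modular function $\lambda\colon\mathbb H\to\W$, which exhibits $\mathbb H$ as the universal cover of $\W$ and satisfies $\lambda(i\infty)=0$ together with the classical expansion $\lambda(\tau)=16q+O(q^2)$, $q=e^{i\pi\tau}$, as $\im\tau\to\infty$; differentiating, $\lambda'(\tau)=16\,i\pi q+O(q^2)$. Since $\lambda^{*}\rho_\W$ is the curvature $-1$ metric $\abs{d\tau}\big/\im\tau$ on $\mathbb H$, for $p=\lambda(\tau)$ close to $0$ one gets
\[
\im\tau=\tfrac1\pi\log(16/\abs p)+O(\abs p),\qquad \abs{\lambda'(\tau)}=\pi\abs p\,\br{1+O(\abs p)},
\]
and therefore
\[
F_K^\W(p,1)=\rho_\W(p)=\frac{1/\im\tau}{\abs{\lambda'(\tau)}}=\frac{1+o(1)}{\abs p\log(1/\abs p)}\qquad(p\to0),
\]
which yields the lower bound (with, in fact, the sharp constant). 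Combining the two estimates with $\abs p=\dd$ gives $F_K^\W(p,\xi)\approx 1\big/\br{\dd\log(1/\dd)}$.

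If one prefers to avoid explicit modular forms, the lower bound can instead be obtained from the general theory of cusps: one first checks that the loop around $0$ corresponds to a parabolic deck transformation --- for instance because $\W$ carries a complete hyperbolic metric of finite area, so that each of its three ends is necessarily a cusp --- and then the cusp neighbourhood theorem identifies a punctured-disc neighbourhood of $0$ in $\W$ isometrically with a standard cusp $\set{\im\tau>T}/(\tau\sim\tau+1)$; transporting that model metric through the coordinate $z$ vanishing at the puncture again gives $\rho_\W\asymp\abs{dz}\big/\br{\abs z\log(1/\abs z)}$ near $0$. Either route isolates the same key point --- the parabolic (cusp) nature of the puncture --- which is where the logarithm in the estimate comes from.
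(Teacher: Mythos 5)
Your argument is correct and follows the same overall strategy as the paper: both the paper and you uniformize $\W$ by the elliptic modular function $\lambda\colon\mathbb H\to\W$, pull back the hyperbolic metric, reduce the problem to estimating $\abs{\lambda'(\tau)}$ for $\im\tau$ large, and get the matching upper bound from the inclusion $\Delta\sm\set0\subset\W$ plus the explicit punctured-disc metric. The one genuine difference is in how the derivative estimate is obtained. The paper writes $\lambda=N/D$ as a quotient of two explicit trigonometric series and proves, term by term, that $\abs{D'}$ is bounded and $\abs{N'}\lesssim e^{-\pi\im\tau}$ (their Lemma 2), concluding $\abs{\lambda'(\tau)}\lesssim e^{-\pi\im\tau}$ and hence only the lower bound on the metric from the covering map. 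You instead differentiate the $q$-expansion $\lambda=16q+O(q^2)$, $q=e^{i\pi\tau}$, to get the two-sided asymptotic $\abs{\lambda'(\tau)}=16\pi\abs q\,(1+O(\abs q))$; this is cleaner, and it actually yields the sharp constant $F_K^\W(p,1)=(1+o(1))/(\abs p\log(1/\abs p))$ rather than just $\approx$, making the separate punctured-disc upper bound logically redundant (though harmless). The only point you should make explicit is that differentiating an $O(q^2)$ error term is justified here because $\lambda$, being invariant under $\tau\mapsto\tau+2$ and tending to $0$ at the cusp, is a holomorphic function of $q$ on the unit $q$-disc vanishing at $q=0$, so Cauchy estimates (or termwise differentiation of the convergent $q$-series) give $\lambda'(\tau)=i\pi q\br{16+O(q)}$ uniformly in $\re\tau$; without that remark, "differentiating" a Landau symbol is not legitimate. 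Your alternative route via the cusp neighbourhood theorem is also sound and is the more conceptual explanation of where the logarithm comes from, but it is not what the paper does.
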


We prove the above result in Section 3 by using the elliptic modular function and calculating its derivatives.

\subsection{Comparison of the Bergman metric on a ring domain in $\C^n$}

If a metric $F$ satisfies the non-increasing property under holomorphic mappings, then it is immediate that $F^A\ge F^B$ if $A\subset B$, where $F^A$ and $F^B$ denote the metric $F$ on $A$ and $B$ respectively, since the metric should not increase under the inclusion mapping. 

We consider a ring domain $\W=\set{r<\norm z<1}\subset\C^n$, $r\in(0,1)$. Since $\W\subset\B^n=\set{\norm z<1}$, we have the following inequality $F^\W\ge F^{\B^n}$, where $F$ denotes any of the Kobayashi, Carath\'eodory, Azukawa and Sibony metrics. In fact, the Sibony, Azukawa, and the Kobayashi metrics blow up near the inner boundary of $\W$ in the normal direction, see \cite{FL}, \cite{Krantz}. 

In section 4, We study how the Bergman metric behaves on a ring domain in $\C^n$. First, we  show that the Bergman metric does not blow up near the inner boundary. In Proposition \ref{627}, we show this in a more general setting: if $K\subset\subset\W\subset\subset\C^n$ and $K$, $\W$ are domains, then $F_B^{\W\sm K}\approx F_B^\W$, where $F_B$ denotes the Bergman metric. We then show the following strict inequality in the tangential direction on a ring domain in $\C^n$:

\begin{thm}\label{602}
Let $\B^n$ be the unit ball in $\C^n$ and  $\Omega$ be the ring domain,  $\Omega = \{z\in\C^{n}:r<|z|<1\}$, $r\in(0,1)$. Let $p\in\W$ and $\xi\in T_p\W$ be such that $\xi\cdot\cl p=0$, i.e., $\xi$ is in the tangential direction to the inner boundary. Then
\begin{equation}
F_{B}^{\W}(p,\xi)\lneq F_{B}^{\mathbb{B}^n}(p,\xi),\quad\forall p\in\W.
\end{equation}
\end{thm}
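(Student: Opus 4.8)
The plan is to reduce the inequality, after symmetry normalizations, to an explicit comparison of two power series coming from the monomial orthogonal bases of $A^{2}(\W)$ and $A^{2}(\B^{n})$. Assume throughout that $n\ge 2$; for $n=1$ there is no nonzero tangential direction and the statement is vacuous. Both $\W$ and $\B^{n}$ are invariant under the unitary group $U(n)$, which acts by Bergman isometries, and $F_{B}$ is homogeneous of degree one in $\xi$; hence after a unitary rotation and rescaling it suffices to take $p=(a,0,\dots,0)$ with $a=\abs p\in(r,1)$ and $\xi=e_{2}=(0,1,0,\dots,0)$, and to prove $F_{B}^{\W}(p,e_{2})<F_{B}^{\B^{n}}(p,e_{2})$ for every such $a$.

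I would next record the two $L^{2}$ structures. Since $n\ge 2$, Hartogs' theorem shows every function in $A^{2}(\W)$ extends holomorphically across the inner ball $\set{\abs z\le r}$, a compact subset of $\B^{n}$; so $A^{2}(\W)$ and $A^{2}(\B^{n})$ are the same space of functions, their Hilbert norms differing only by $\int_{\abs z\le r}\abs f^{2}$. Both domains are Reinhardt, so $\set{z^{\alpha}}_{\alpha\in\N^{n}}$ is an orthogonal basis of each; integrating radially gives $\norm{z^{\alpha}}_{\W}^{2}=c_{\abs\alpha}\norm{z^{\alpha}}_{\B^{n}}^{2}$ where $c_{j}:=1-r^{2n+2j}$, so $0<c_{0}<c_{1}<\cdots$. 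With $p=(a,0,\dots,0)$, among the orthonormal monomials $\phi_{\alpha}=z^{\alpha}/\norm{z^{\alpha}}$ only the $\alpha=(k,0,\dots,0)$ satisfy $\phi_{\alpha}(p)\ne 0$ and only the $\alpha=(m,1,0,\dots,0)$ satisfy $\partial_{2}\phi_{\alpha}(p)\ne 0$; these index sets are disjoint, so the cross term in the standard formula for the Bergman metric tensor vanishes and
\begin{equation*}
F_{B}^{D}(p,e_{2})^{2}=\frac{\sum_{m\ge 0}a^{2m}\big/\norm{z_{1}^{m}z_{2}}_{D}^{2}}{\sum_{k\ge 0}a^{2k}\big/\norm{z_{1}^{k}}_{D}^{2}},\qquad D\in\set{\W,\B^{n}}.
\end{equation*}
Using $\norm{z_{1}^{k}}_{\B^{n}}^{2}=\pi^{n}k!/(n+k)!$ and $\norm{z_{1}^{m}z_{2}}_{\B^{n}}^{2}=\pi^{n}m!/(n+m+1)!$, the right-hand side for $D=\B^{n}$ sums to $(n+1)/(1-a^{2})$, while for $D=\W$ the $k$-th, resp. $(m+1)$-st, norm picks up the factor $c_{k}$, resp. $c_{m+1}$.

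Writing $v_{k}:=a^{2k}/\norm{z_{1}^{k}}_{\B^{n}}^{2}>0$ and $u_{m}:=a^{2m}/\norm{z_{1}^{m}z_{2}}_{\B^{n}}^{2}>0$, one has $u_{m}=(n+m+1)v_{m}$, and clearing denominators turns the theorem into the strict inequality
\begin{equation*}
\br{\sum_{m}\frac{u_{m}}{c_{m+1}}}\br{\sum_{k}v_{k}}<\br{\sum_{m}u_{m}}\br{\sum_{k}\frac{v_{k}}{c_{k}}}.
\end{equation*}
The crucial feature — and the reason the \emph{tangential} direction is the special one — is the shift of indices: the weight $a^{2m}$ on the left is divided by $c_{m+1}$, because the monomial $z_{1}^{m}z_{2}$ has degree $m+1$, whereas $a^{2k}$ on the right is divided by $c_{k}$. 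I would finish by setting $\nu_{k}:=v_{k}/\sum_{j}v_{j}$ and $g(m):=n+m+1$ (so $u_{m}=g(m)v_{m}$); the inequality becomes $\sum_{m}\nu_{m}g(m)/c_{m+1}<\br{\sum_{m}\nu_{m}g(m)}\br{\sum_{k}\nu_{k}/c_{k}}$. Since $c_{m+1}>c_{m}$ we get $\sum_{m}\nu_{m}g(m)/c_{m+1}<\sum_{m}\nu_{m}g(m)/c_{m}$ termwise; and since $g$ is increasing while $1/c_{m}$ is decreasing, Chebyshev's sum inequality gives $\sum_{m}\nu_{m}g(m)/c_{m}\le\br{\sum_{m}\nu_{m}g(m)}\br{\sum_{k}\nu_{k}/c_{k}}$. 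Chaining the two yields the claim, and hence the theorem.

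The genuine content, and the main obstacle, is this last step: after the symmetry reductions and the monomial computation, the comparison of Bergman metrics collapses to a one-line rearrangement inequality, and one must isolate the role of the degree shift $c_{m+1}$ versus $c_{k}$ — without it one only obtains $\le$, and in the normal direction the cross term no longer vanishes, so that case is genuinely different. Everything before this step is routine once one commits to working with the monomial bases.
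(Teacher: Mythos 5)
Your proof is correct, and the finishing step is genuinely different from the paper's. The setup is the same: both reduce to $p=(a,0,\dots,0)$ and $\xi=e_2$, use the monomial orthogonal basis of the Reinhardt domains, observe that the mixed term of the Levi form of $\log K$ vanishes at $p$, and arrive at the same ratio of power series in which the tangential weights carry the shifted factor $1/c_{m+1}$ while the kernel weights carry $1/c_k$ (in the paper's notation, $\beta_j=1/c_{j+1}$ and $\gamma_j=1/c_j$, written out for $n=2$). Where you diverge is in proving the resulting inequality between products of series: the paper cross-multiplies, groups the coefficient of $x^{2(j+k)}$ into terms $A_{jk}$, and shows each is negative by an explicit computation with the partial geometric sums $h_n=1+r+\cdots+r^n$; you instead normalize $\set{v_k}$ to a probability measure and chain a strict termwise estimate (from $c_{m+1}>c_m$) with the weighted Chebyshev sum inequality applied to the oppositely monotone sequences $g(m)=n+m+1$ and $1/c_m$. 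Your argument is shorter, makes transparent that the strictness comes precisely from the degree shift, and works verbatim for all $n\ge 2$, whereas the paper carries out the computation only for $n=2$ and asserts that the generalization is easy. The only points worth making explicit in a final write-up are the standard weighted form of Chebyshev's inequality (via $\sum_{i,j}\nu_i\nu_j(g(i)-g(j))(h(i)-h(j))\le 0$) and the convergence of $\sum v_k/c_k$, which is immediate since $c_k\ge c_0>0$.
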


Note that we have the reverse inequality, $F^\W(p,\xi)\ge F^{\B^n}(p,\xi)$ for the Kobayashi, Carath\'eodory, Azukawa, and the Sibony metric for all $p\in\W$ and for all $\xi\in\C^n$.

We show the strict inequality in the normal direction on a ring domain in $\C^2$ near the inner boundary when the inner radius is small enough, see Proposition \ref{ringnormal}. Hence we obtain the strict inequality in all directions in the ring domains in $\C^2$ in such cases: 

\begin{thm} \label{MainThmBerg}
Let $\B^2$ be the unit ball in $\C^2$ and  $\Omega$ be the ring domain,  $\Omega = \{z\in\C^{2}:r<|z|<1\}$ for $r>0$ small. Let
$p=(r+\epsilon,0)$ for $\e>0$ small  and let $\xi\in\C^{2}$. Then we have 
\begin{equation}
F_{B}^{\W}(p,\xi)\lneq F_{B}^{\mathbb{B}^2}(p,\xi). 
\end{equation}
 \end{thm}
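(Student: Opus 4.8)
The plan is to decompose Theorem~\ref{MainThmBerg} into the two directional estimates already announced in the paper and then glue them. Writing $\xi=\xi_{\tan}+\xi_{\mathrm{nor}}$ relative to the splitting of $\C^2$ into the complex line $\C\cl p$ (normal to the inner boundary sphere $\{|z|=r\}$ at $p=(r+\e,0)$) and its orthogonal complement (the tangential direction), one first records that the Bergman metric, being the length of a tangent vector in the Bergman kernel's K\"ahler metric, satisfies the parallelogram-type decomposition $F_B(p,\xi)^2 = \sum_{j,k} g_{j\bar k}(p)\,\xi_j\bar\xi_k$. So it suffices to show that the Hermitian form $g^\W_{j\bar k}(p) - g^{\B^2}_{j\bar k}(p)$ is negative definite at the specific point $p=(r+\e,0)$. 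Theorem~\ref{602} already supplies strict negativity on the tangential vector $\xi=(0,1)$ (since $(0,1)\cdot\overline{(r+\e,0)}=0$), and Proposition~\ref{ringnormal} supplies strict negativity on the normal vector $\xi=(1,0)$ for $r$ small and $p$ near the inner boundary. The remaining task is to control the off-diagonal (mixed) term $g_{1\bar 2}(p)$ and to upgrade the two separate strict inequalities into strict definiteness of the $2\times 2$ difference matrix.

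The key observation that makes the gluing clean is a symmetry argument: the ring domain $\W$ and the ball $\B^2$ are both invariant under the rotation $(z_1,z_2)\mapsto(z_1,e^{i\theta}z_2)$, and the point $p=(r+\e,0)$ is a fixed point of this whole circle action. Under a biholomorphic automorphism $\Phi$ fixing $p$, the Bergman metric tensor transforms by $g(p)\mapsto (D\Phi(p))^* g(p) (D\Phi(p))$; applying this with $\Phi$ the rotation, whose differential at $p$ is $\mathrm{diag}(1,e^{i\theta})$, forces $g_{1\bar 2}(p)=e^{-i\theta}g_{1\bar 2}(p)$ for all $\theta$, hence $g_{1\bar 2}^\W(p)=g_{1\bar 2}^{\B^2}(p)=0$. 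Thus the difference matrix $g^\W(p)-g^{\B^2}(p)$ is diagonal, with diagonal entries $g_{1\bar 1}^\W(p)-g_{1\bar 1}^{\B^2}(p)$ and $g_{2\bar 2}^\W(p)-g_{2\bar 2}^{\B^2}(p)$. Now $g_{1\bar 1}^\W(p) = F_B^\W(p,(1,0))^2$ is exactly the squared normal-direction metric, strictly less than $g_{1\bar 1}^{\B^2}(p)$ by Proposition~\ref{ringnormal}; and $g_{2\bar 2}^\W(p)=F_B^\W(p,(0,1))^2$ is the squared tangential metric, strictly less than $g_{2\bar 2}^{\B^2}(p)$ by Theorem~\ref{602}. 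A diagonal Hermitian matrix with strictly negative diagonal entries is strictly negative definite, so $F_B^\W(p,\xi)^2 - F_B^{\B^2}(p,\xi)^2 < 0$ for every $\xi\neq 0$, and trivially $=0$ for $\xi=0$; taking square roots gives $F_B^\W(p,\xi)\lneq F_B^{\B^2}(p,\xi)$ for all $\xi$, which is the claim.

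Concretely the steps are: (i) state the transformation rule for the Bergman metric tensor under biholomorphisms and under inclusions, and reduce the theorem to comparing the Hermitian forms $g^\W(p)$ and $g^{\B^2}(p)$; (ii) invoke the rotational symmetry fixing $p$ to kill the mixed term, so both forms are diagonal in the standard basis; (iii) identify the two diagonal entries with the squared normal and tangential Bergman lengths and cite Proposition~\ref{ringnormal} and Theorem~\ref{602} respectively for the strict inequalities (being careful that Theorem~\ref{602} applies since $p$ lies in $\W$ and $(0,1)$ is tangential to the inner sphere at $p$, and that Proposition~\ref{ringnormal} is stated precisely for $p=(r+\e,0)$ with $r,\e$ small); (iv) conclude strict negative-definiteness of the difference and hence the pointwise strict inequality in all directions. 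I expect the main obstacle to be purely bookkeeping rather than conceptual: one must make sure the hypotheses "$r>0$ small" and "$\e>0$ small" in Theorem~\ref{MainThmBerg} are compatible with, and indeed imply, the smallness hypotheses needed in both Proposition~\ref{ringnormal} (which presumably requires $r$ small and $p$ close enough to $\{|z|=r\}$, i.e. $\e$ small relative to $r$) and Theorem~\ref{602} (which holds for all $p\in\W$ with no smallness needed, so this direction is free); the symmetry argument itself is short, but one should double-check that the differential of the rotation at $p$ is genuinely $\mathrm{diag}(1,e^{i\theta})$ in the coordinates used, which it is because $p$ has vanishing second coordinate.
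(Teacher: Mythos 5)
Your proposal is correct and follows essentially the same route as the paper: decompose $\xi$ into normal and tangential components at $p=(r+\e,0)$, show the mixed term $g_{1\bar 2}(p)$ vanishes, and then combine Theorem~\ref{602} with Proposition~\ref{ringnormal}. The only cosmetic difference is that you kill the off-diagonal entry via the circle action $(z_1,z_2)\mapsto(z_1,e^{i\theta}z_2)$ fixing $p$, whereas the paper reads the same vanishing directly off the Reinhardt expansion $K_\W(z,z)=\sum a_{jk}|z_1|^{2j}|z_2|^{2k}$ evaluated at $z_2=0$; these are two phrasings of the same symmetry.
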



This paper is organized as follows: In Section \ref{Definitions and Background} we give definitions and background for the metrics. We prove Theorem \ref{MainThmHyp} in Section \ref{Kob} and Theorem  \ref{602}, \ref{MainThmBerg} in Section \ref{Ring}.


\section{Definitions and Background} \label{Definitions and Background}
In this section, we give definitions and properties of the metrics
which are used in later sections. For more detailed discussion of
the metrics, see \cite{Pflug}.

\begin{defn} Let $\Omega\subset\C^{n}$ be a domain, $p\in\W$, and $\xi=(\xi_{1},...,\xi_{n})\in\C^{n}$. Let $\Delta$ be the unit disk in $\C$, and let $\B^{n}(p,r)\subset\C^{n}$
be the ball of radius $r$ centered at $p$.

\begin{itemize}
\item  Carath\'eodory pseudometric $F_{C}^{\W}(p,\xi)$ is defined as
 \[ F_{C}^{\W}(p,\xi)=\sup\{|f'(p)\cdot\xi|:f\in\mathcal{O}(\W,\Delta),f(p)=0\},\]
 where $\mathcal{O}(\W,\Delta)$ is the set of holomorphic functions from $\Omega$ to $\Delta$. \\[0.1cm]

 \item {Kobayashi pseudometric} $F_{K}^{\W}(p,\xi)$ is defined as

 \[
F_{K}^{\W}(p,\xi)=\inf\left\{ |\alpha| \ :\ f\in\mathcal{O}(\Delta,\W),f(0)=p,\exists\alpha>0,\alpha f'(0)=\xi\right\}, \]
 where $\mathcal{O}(\Delta, \Omega)$ is the set of holomorphic functions from $\Delta$ to $\Omega$. \\[0.1cm]

\item {Sibony pseudometric}  $F_{S}^{\W}(p,\xi) $ is defined as
$$
F_{S}^{\W}(p,\xi) =\sup\left\{ (\partial\overline{\partial}u(p)(\xi,\overline{\xi}))^{1/2} =\left(\sum_{i,j=1}^{n}\frac{\partial^{2}u(p)}{\partial z_{i}\partial\overline{z_{j}}}\xi_{i} \overline{\xi_{j}}\right)^{1/2}:u\in\mathcal{S}_{\W}(p)\right\},$$
 where
 $\mathcal{S}_{\W}(p)$ is the set of functions $u$ such that $u:\W\to[0,1)$ vanishes at $p$,  $\log u$ is plurisubharmonic, and $u$ is $C^2$ near $p$.

\item {Azukawa pseudometric} $F_{A}^{\W}(p,\xi) $ is defined as
$$
F_{A}^{\W}(p,\xi)  =\sup\left\{ \limsup_{\lambda\searrow0}\frac{1}{|\lambda|}u(p+\lambda\xi): u\in\mathcal{K}_{\W}(p)\right\},   $$

where
$\mathcal{K}_{\W}(p)$ is the set of functions $u$ such that  $ u:\Omega \to[0,1),$ $\log u$ is plurisubharmonic, and there exits $M>0, r>0$  such that  $ \mathbb{B}^n (p,r)\subset\Omega$ and $u(z) \le M\|z-p\| $ for all $z\in \mathbb B^{n}(p,r) .$

\end{itemize}
 \end{defn}

Let $K_\Omega$ be the Bergman kernel of $\Omega.$

\begin{defn} The Bergman metric  $F_{B}^{\W}(p,\xi)$  is defined by 
 \[
F_{B}^{\W}(p,\xi)=\left(\sum_{\nu,\mu=1}^{n}\frac{\partial^{2}}{\partial z_{\nu}\partial\overline{z_{\mu}}}\log K_{\W}(z,z)\xi_{\nu}\overline{\xi_{\mu}}\right)^{1/2}
\]
provided that $ K_\W $ is nonvanishing on $\W$. 
 \end{defn}

Except for the Bergman metric  the other four metrics are non-increasing with respect to  holomprhic mappings, that is, if $\Phi: \Omega_1 \to \Omega_2$ is holomprhic, then
$F^{\Omega_1}(p, \xi) \geq F^{\Omega_2} (\Phi(p), \Phi_*(\xi))$ where $F^{\Omega_i}$ is one of $F_C^{\Omega_i},F_S^{\Omega_i},F_A^{\Omega_i},$ and $ F_K^{\Omega_i}$.
 Moreover,  they  satisfy the following relationship:

\begin{equation} \label{eqn1}
F_{C}^{\W} (p, \xi) \le F_{S}^{\W}  (p, \xi) \le F_{A}^{\W}  (p, \xi) \le F_{K}^\Omega  (p, \xi)
\end{equation}

 for all $p$ and $\xi$. The Bergman metric behaves  differently  from the rest of metrics  in the sense that it does not have non-increasing property, nor does it fit in the comparison \eqref{eqn1}.  Between the  Carath\'eodory and the  Bergman metric the following is known.

\begin{thm} [K. Hahn, \cite{KH} ]
In any complex manifold $\Omega$, the Bergman metric $F_B^\Omega$  is always greater than or equal to the Carath\'eodory differential metric $F_C^\Omega$ if $M$ admits them:
\begin{equation}
F_{C}^\Omega(p, \xi)  \le F_{B}^\Omega (p, \xi).
\end{equation}
\end{thm}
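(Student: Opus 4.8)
\emph{Proof strategy.} The plan is to use the Hilbert-space (extremal) descriptions of the Bergman kernel and the Bergman metric and to feed an admissible Carath\'eodory function into them. I argue for a domain $\W\subset\C^{n}$ on which $K_\W$ is positive; the complex-manifold case is identical once holomorphic functions are replaced by $L^{2}$ holomorphic $n$-forms and $\xi$ is read in a local chart. Let $\mathcal H=L^{2}\mathcal O(\W)$ be the Bergman space with its usual inner product and norm $\norm{\cdot}$. Two facts are needed. First, the extremal description of the kernel,
\[
K_\W(p,p)=\sup\set{\abs{f(p)}^{2}:\ f\in\mathcal H,\ \norm f\le 1},
\]
with the supremum attained by $f_{0}:=K_\W(\cdot,p)/\sqrt{K_\W(p,p)}$, so that $\norm{f_{0}}=1$ and $\abs{f_{0}(p)}^{2}=K_\W(p,p)$. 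Second, the extremal description of the metric,
\[
F_{B}^{\W}(p,\xi)^{2}=\frac{1}{K_\W(p,p)}\,\sup\set{\,\abs{f'(p)\cdot\xi}^{2}:\ f\in\mathcal H,\ \norm f\le 1,\ f(p)=0\,},
\]
where $f'(p)\cdot\xi=\sum_{j=1}^{n}\frac{\p f}{\p z_{j}}(p)\,\xi_{j}$, exactly as in the definition of $F_{C}^{\W}$. The first identity is classical; the second I would obtain by writing $K_\W(z,z)=\sum_{k}\abs{\phi_{k}(z)}^{2}$ for an orthonormal basis $\set{\phi_{k}}$ of $\mathcal H$, computing the complex Hessian $\p^{2}\log K_\W(z,z)/\p z_{i}\,\p\cl{z_{j}}$, and matching the resulting quadratic form to $K_\W(p,p)^{-1}$ times the value of the displayed constrained $\ell^{2}$ problem by a Cauchy--Schwarz argument; alternatively it can be cited from \cite{Pflug}.

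With these in hand the comparison is a single substitution. Fix $p\in\W$ and $\xi\in\C^{n}$, and let $\vphi\in\mathcal O(\W,\D)$ with $\vphi(p)=0$ be an arbitrary competitor in the definition of $F_{C}^{\W}(p,\xi)$. Put $f:=\vphi\cdot f_{0}$. Since $\abs\vphi\le 1$ on $\W$, $f\in\mathcal H$ with $\norm f\le\norm{f_{0}}=1$; moreover $f(p)=\vphi(p)f_{0}(p)=0$, and the product rule gives
\[
f'(p)\cdot\xi=\br{\vphi'(p)\cdot\xi}\,f_{0}(p)+\vphi(p)\br{f_{0}'(p)\cdot\xi}=\br{\vphi'(p)\cdot\xi}\sqrt{K_\W(p,p)}.
\]
Hence $f$ is admissible in the second extremal problem, so
\[
F_{B}^{\W}(p,\xi)^{2}\ \ge\ \frac{\abs{f'(p)\cdot\xi}^{2}}{K_\W(p,p)}\ =\ \abs{\vphi'(p)\cdot\xi}^{2}.
\]
Taking the supremum over all such $\vphi$ yields $F_{B}^{\W}(p,\xi)\ge F_{C}^{\W}(p,\xi)$, the asserted inequality; note that no finiteness of $\Vol(\W)$ is needed, since $\abs\vphi\le 1$ makes $\vphi f_{0}$ square-integrable whenever $f_{0}$ is.

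The only genuine obstacle is the extremal formula for $F_{B}^{\W}$ --- and, in the complex-manifold formulation, phrasing it invariantly with $L^{2}$ holomorphic $n$-forms, where one must also confirm that multiplying an $n$-form by the scalar $\vphi$ interacts with the kernel form as above. Everything after that is routine: the content of the argument is the elementary observation that multiplying the extremal kernel section $f_{0}$ by a bounded holomorphic $\vphi$ vanishing at $p$ does not increase the $L^{2}$ norm, annihilates the value at $p$, and extracts precisely the number $\vphi'(p)\cdot\xi$ defining $F_{C}^{\W}$.
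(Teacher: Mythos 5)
Your argument is correct, and it is essentially Hahn's original proof of this result; the paper itself does not prove the theorem but only cites \cite{KH}. The two extremal descriptions you rely on are both standard, and the second one is exactly the formula $\left(F_B^\W(z,\xi)\right)^2=b_\W^2(z,\xi)/K_\W(z,z)$ that the authors themselves invoke as equation (4.1) in the proof of Proposition 1, so your substitution $f=\vphi\cdot K_\W(\cdot,p)/\sqrt{K_\W(p,p)}$ closes the argument with no gaps.
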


However, Kobayashi  and Bergman metrics do not have any such relation and  they are in fact incomparable as discussed in the introduction. 

The boundary behavior of the Sibony metric on the ring domain in $\C^{2}$ near the inner boundary
is a special case of domains  studied in \cite{FL}.

\begin{thm} [Forn\ae ss-Lee, \cite{FL}]
Let $\Omega =\{  \frac14 < |z_1|^2 + |z_2|^2 < 1\} \subset \C^2$ and $P_\delta = ( 1/2 + \delta, 0)$, and $\xi=(1,0)$. Then we have

\[
F_{S}^{\W}(p,\xi)\approx\frac{1}{\delta^{1/2}}\]
for $\delta>0$ small enough.
\end{thm}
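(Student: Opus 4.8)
The plan is to establish the two bounds $F_{S}^{\W}(p,\xi)\gtrsim\delta^{-1/2}$ and $F_{S}^{\W}(p,\xi)\lesssim\delta^{-1/2}$ separately, where I write $p=P_\delta=(1/2+\delta,0)$, $\xi=(1,0)$, and $\rho(z)=\norm z^2-\tfrac14$, so that $\rho>0$ on $\W$ near the inner sphere and $\rho(p)=\delta+\delta^2\approx\delta$; let $q=(1/2,0)$ be the nearest boundary point. The starting observation is a normal form for the extremal problem. Since an admissible $u$ has $u(p)=0$, is $C^{2}$ and $\ge 0$, its radial complex Hessian is the circular average $\p\dbar u(p)(\xi,\bar\xi)=\lim_{\e\to 0}\e^{-2}\,\frac{1}{2\pi}\int_0^{2\pi}u(p+\e e^{i\theta}\xi)\,d\theta$. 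Writing $\phi=\log u$, which is plurisubharmonic, $\le 0$, with $\phi\sim 2\log\norm{z-p}$ at $p$, one gets $\p\dbar u(p)(\xi,\bar\xi)=e^{\chi(p)}$, where $\chi(p)=\lim_{z\to p}\bigl(\phi(z)-2\log\norm{z-p}\bigr)$ is the directional Robin constant. Thus $F_{S}^{\W}(p,\xi)=\bigl(\sup_u e^{\chi(p)}\bigr)^{1/2}$, and the theorem is equivalent to the statement that the extremal Robin constant satisfies $\chi(p)=\log(1/\delta)+O(1)$. For the upper bound it is convenient to use $F_{S}^{\W}\le F_{A}^{\W}$ from \eqref{eqn1}, since $\log F_{A}^{\W}(p,\xi)=\lim_{\lambda\to 0}\bigl(g_{\W}(p+\lambda\xi,p)-\log\abs\lambda\bigr)$ is exactly the Robin constant of the pluricomplex Green function $g_{\W}(\cdot,p)$ in the direction $\xi$.

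For the lower bound I would exhibit a single admissible competitor whose Robin constant at $p$ is $\approx\log(1/\delta)$. The natural singular part is $\log\norm{z-p}^{2}$, which is plurisubharmonic with Levi form $\approx\norm{z-p}^{-2}$ blowing up at $p$; the problem is to add a correction keeping $\log u\le 0$ and plurisubharmonic while forcing $u$ to rise from $0$ at $p$ to order $1$ already at distance $\approx\sqrt\delta$, i.e. $\chi(p)\approx-\log\rho(p)\approx\log(1/\delta)$. The geometric mechanism is the complex tangency of the hyperplane $\{z_1=1/2\}$ to the inner sphere at $q$: in coordinates $w=z-q$ the domain is $\{\re w_1+\abs{w_1}^2+\abs{w_2}^2>0\}$, so a tangential excursion (in $w_2$) of size $\approx\sqrt\delta$ from $p$ leaves $\rho$ of order $\delta$, and the pole's positive Levi form $\approx\norm{z-p}^{-2}$ matches the inner-sphere tangential defect of $-\log\rho$ precisely at the scale $\norm{z-p}\approx\sqrt\delta$. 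Concretely I would model $\phi$ on $\log\norm{z-p}^{2}-\log\rho$ near $p$ (whose regular part is $-\log\rho(p)\approx\log(1/\delta)$, giving the desired Hessian $1/\rho(p)\approx1/\delta$) and then repair plurisubharmonicity and global negativity; the delicate point is that the bare combination fails to be plurisubharmonic in the inner-tangential ($w_2$) direction, so the correction must supply an extra $\approx\delta^{-1}$ of tangential positivity at the right scale (via $\log\norm z^{2}$-type terms and a normalizing constant enforcing $u<1$). This yields $\p\dbar u(p)(\xi,\bar\xi)\gtrsim\delta^{-1}$, hence $F_{S}^{\W}(p,\xi)\gtrsim\delta^{-1/2}$.

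The upper bound is where I expect the main obstacle. The difficulty is that the two cheap estimates both give the \emph{wrong} exponent: restricting $u$ to the complex line through $p$ and $q$ gives $F_{S}^{\W}(p,\xi)\le F_{S}^{A}(1/2+\delta,1)\approx\delta^{-1}$ for the annulus $A=\{1/2<\abs{z_1}<1\}$, and localizing to the paraboloid model $\{\re w_1+\abs{w_2}^2>0\}$ gives, by the parabolic scaling $w\mapsto(\delta w_1,\sqrt\delta\,w_2)$, again $\delta^{-1}$. Both overshoot the truth $\delta^{-1/2}$, because the inner boundary is strictly \emph{pseudoconcave}, so localization does not control the metric; the correct rate is a genuinely global, two-variable plurisubharmonic phenomenon. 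The plan is therefore to prove directly that every competitor has $\chi(p)\le\log(1/\delta)+O(1)$, equivalently that $g_{\W}(p+\lambda\xi,p)\le\log\abs\lambda+\tfrac12\log(1/\delta)+O(1)$. I would combine two ingredients exploiting plurisubharmonicity in both variables: first, the sub-mean-value inequality for $\phi$ along the family of analytic disks tangent to the inner sphere at $q$, whose normal reach is only $O(\sqrt\delta)$, which caps how steeply $\phi$ can climb in the $\xi$-direction; and second, the Hadamard-type convexity of the spherical averages $r\mapsto\frac{1}{\sigma(S_r)}\int_{\norm z=r}\phi$ of the plurisubharmonic function $\phi$ around the excluded ball, together with $\phi\le 0$ and the distributed Riesz mass of the pole, to bound the Robin constant from above. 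Making this estimate sharp, with constant exactly $\tfrac12$ in front of $\log(1/\delta)$, is the crux of the argument and the place where the pseudoconcavity of the inner boundary forces the exponent $1/2$ rather than $1$.
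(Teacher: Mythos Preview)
The paper does not supply its own proof of this statement: it is quoted from Forn\ae ss--Lee \cite{FL} in the background Section~\ref{Definitions and Background}, with no argument given. So there is nothing in the present paper to compare your proposal against; the comparison would have to be with \cite{FL} itself.

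Evaluating the proposal on its own terms: it is a plan, not a proof, and the upper bound contains a genuine gap. For the lower bound you correctly isolate the geometric mechanism (complex tangency of the inner sphere, natural scale $\sqrt\delta$) and name a reasonable model $\log\norm{z-p}^{2}-\log\rho$, but you explicitly defer the repair of plurisubharmonicity and the verification that the resulting $u$ lies in $\mathcal S_\W(p)$ with $u<1$ globally; the conclusion ``This yields $\p\dbar u(p)(\xi,\bar\xi)\gtrsim\delta^{-1}$'' is asserted rather than shown. For the upper bound you yourself flag the difficulty---both obvious comparisons overshoot to $\delta^{-1}$---and then offer only a heuristic (sub-mean-value along tangential disks plus Hadamard convexity of spherical averages of $\phi$) without explaining how these combine to force the exponent $1/2$. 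In particular, the spherical averages of $\phi=\log u$ are centered at the origin while the logarithmic pole is at $p$, so it is not clear how that convexity controls the directional Robin constant at $p$; and the tangential-disk sub-mean-value argument, as stated, bounds the growth of $\phi$ in the $w_2$-direction but does not directly control the Hessian of $u$ at $p$ in the \emph{normal} direction $\xi=(1,0)$. As written, the upper bound is the missing idea, and the proposal does not yet contain a mechanism that would produce $\delta^{-1/2}$ rather than merely something between $\delta^{-1/2}$ and $\delta^{-1}$.
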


In particular since the Sibony metric blows up towards
the inner boundary in the normal direction, so does Azukawa metric by \eqref{eqn1}.   In section 4 we show that Bergman is bounded in the inner boundary, hence not comparable with either the Sibony or the Azukawa metric.

 For the convenience to readers, we provide the formula for Bergman kernel and Bergman metric on the unit ball: 
\[ K_{\mathbb B^n}(z, \zeta) = \frac{n!}{\pi^n} \frac{1}{ (1-\sum z_j\bar \zeta_j)^{n+1}} 
\] 
and
\begin{equation} \label{eq123} 
 F_B^{\mathbb B^n}(p; \xi) = \sqrt{n+1} \biggl(  \frac{|\inner{p,\xi}|^2  }{(1-|p|^2 )^2} + \frac{|\xi|^2}{1-|p|^2} \biggr)^{1/2}
\end{equation} 
 where $z=(z_1, \ldots, z_n),$ and $\zeta = (\zeta_1, \ldots, \zeta_n).$


\section{Boundary behavior of the Kobayashi metric on $ \C \sm \set{0,1} $} \label{Kob}

One technique for studying the Kobayashi metric of a domain is to study instead the Kobayashi metric of its covering space.  In general, if $ \pi:\tilde\Omega \to \Omega $ is the covering map it is known that $ F_K^{\tilde{\W}} = \pi^* F_K^{\W} $ \cite[p. 91]{Kobayashi}. In the case that the covering space is a half-plane $ \mathbb{H} $ we have the following equation. We provide a proof for the convenience of the reader.


\begin{lemma} \label{CovSp}
Let $ \Omega $ be a connected domain in $\C$  whose universal covering is $\mathbb{H}$. Let $q\in \mathbb{H}$ and let $m: \mathbb{H} \to \Delta $ be the biholomorphism  such that $m(q) = 0$. 
Let $ p \in \Omega $, $ \xi \in \C^n $ and $\pi: \mathbb{H} \to \Omega$, with $\pi(q) = p$. Then 

\[
F_K^\W (p,\xi) = \frac{ |m'(q)|}{|\pi'(q)|}\|\xi\| . 
\] 
\end{lemma}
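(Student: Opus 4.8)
The plan is to prove the two inequalities $\le$ and $\ge$ separately, in each case using the explicit maps $\pi$ and $m$ to relate analytic discs into $\W$ to analytic discs into $\mathbb{H}$ and into $\Delta$. Throughout one uses that a covering map is a local biholomorphism, so that $\pi'(q)\neq 0$ and the quantity $|m'(q)|/|\pi'(q)|$ is meaningful; the case $\xi=0$ being trivial, assume $\xi\neq 0$.

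For the bound $F_K^\W(p,\xi)\le \frac{|m'(q)|}{|\pi'(q)|}\|\xi\|$ I would simply exhibit a competitor disc. Since $m^{-1}\colon\Delta\to\mathbb{H}$ takes $0$ to $q$, the composition $g:=\pi\circ m^{-1}\colon\Delta\to\W$ satisfies $g(0)=p$ and, by the chain rule, $g'(0)=\pi'(q)/m'(q)\neq 0$. Precomposing $g$ with a rotation $z\mapsto e^{i\theta}z$ of $\Delta$ — which changes neither that the disc is $\W$-valued nor that it sends $0$ to $p$ — one can choose $\theta$ so that $e^{i\theta}g'(0)$ is a positive multiple of $\xi$; the scalar $\alpha>0$ with $\alpha e^{i\theta}g'(0)=\xi$ is then exactly $\alpha=\frac{|m'(q)|}{|\pi'(q)|}\|\xi\|$, and this value therefore dominates the infimum defining $F_K^\W(p,\xi)$.

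For the reverse bound, take an arbitrary $f\in\mathcal{O}(\Delta,\W)$ with $f(0)=p$ and $\alpha f'(0)=\xi$ for some $\alpha>0$. Because $\Delta$ is simply connected, the lifting criterion for the covering $\pi\colon\mathbb{H}\to\W$ produces a holomorphic $\tilde f\colon\Delta\to\mathbb{H}$ with $\pi\circ\tilde f=f$ and $\tilde f(0)=q$. Then $h:=m\circ\tilde f\colon\Delta\to\Delta$ fixes the origin, so the Schwarz lemma gives $|h'(0)|\le 1$. Differentiating $\pi\circ\tilde f=f$ at $0$ gives $\tilde f'(0)=f'(0)/\pi'(q)=\xi/(\alpha\pi'(q))$, hence $h'(0)=m'(q)\tilde f'(0)=m'(q)\xi/(\alpha\pi'(q))$; the Schwarz inequality then reads $\alpha\ge\frac{|m'(q)|}{|\pi'(q)|}\|\xi\|$. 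Taking the infimum over all admissible $f$ yields $F_K^\W(p,\xi)\ge\frac{|m'(q)|}{|\pi'(q)|}\|\xi\|$, completing the proof.

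The only step with genuine content is the existence of the holomorphic lift $\tilde f$: this is where the hypothesis that $\mathbb{H}$ is the \emph{universal} covering of $\W$ enters (a map of the simply connected disc lifts through any covering, and the lift of a holomorphic map is holomorphic since $\pi$ is locally biholomorphic). Everything else reduces to the rotation-invariance of $\Delta$, the chain rule, and the Schwarz lemma. I expect the most error-prone part to be the bookkeeping of which derivative equals what in the lower bound, but it is entirely routine. As an alternative one could shorten the argument by quoting the cited identity $F_K^{\mathbb{H}}=\pi^*F_K^\W$ together with the biholomorphic invariance of $F_K$ under $m$ and the normalization $F_K^\Delta(0,w)=|w|$; I would nonetheless keep the self-contained version above, since the text advertises a proof "for the convenience of the reader."
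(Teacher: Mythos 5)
Your proof is correct and follows essentially the same route as the paper: the lower bound via lifting an arbitrary candidate disc to $\mathbb{H}$ and applying the Schwarz lemma to $m\circ\tilde f$, and the upper bound by exhibiting $\pi\circ m^{-1}$ as a competitor. You simply spell out a couple of details the paper leaves implicit (the rotation needed to make $\alpha>0$ and the explicit derivative bookkeeping).
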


\begin{proof}
Let $ f $ be a candidate for the Kobayashi metric at the point $p$, that is, $f(0) = p$ and $f'(0) $ is the multiple of $\xi$.  Since the unit disc is simply connected  there exists the unique lifting  $ \tilde{f} : \Delta \to \mathbb{H} $ such that $ \tilde f(0) = q$ as in the following diagram. 
\[
\xymatrix{
& \Delta
&
&
& 0 \\
& \mathbb{H} \ar[u]_{m(z)} \ar[d]^{\pi}
&
&
& q \ar @{|->}_m [u] \ar @{|->}^\pi [d] \\
\Delta \ar[r]_{f} \ar @{-->} [ur]^{\tilde{f}}
& \W
&
& 0 \ar @{|->}_f [r] \ar @{|->}^{\tilde{f}} [ur]
& p
}
\]
Let $\pi^{-1}$ be the local inverse in a neighborhood of  $p$.  Since $ m \circ \tilde{f} (0) = 0$ and $ \tilde{f} = \pi^{-1} \circ f $   the Schwarz Lemma gives 

\[
|m'(q) \cdot (\pi^{-1})'(p) \cdot f'(0)| \le 1,
\]
implying
\[
\frac{1}{|f'(0)|} \ge \frac{|m'(q)|}{|\pi'(q)|}. 
\]

This estimate holds for any candidate function and the map $ \pi \circ m^{-1} $ is itself a candidate for the Kobayashi metric. The claim follows.

\end{proof}

We will use Lemma \ref{CovSp} to estimate boundary behavior of the Kobayashi metric on the domains $ \Delta \setminus \set{0} $ and $ \C \sm \set{0,1} $.


\begin{prop} \label{WPrime}
Let  $p$ be the point in $ \Delta\setminus \set{0}$ such that  $ \text{dist}(p,0)= \delta $ and let $\xi = 1$. 
For  every $ \delta>0 $  we have 

\begin{equation*}
F_K^{\Delta \sm \set{0}} (p,\xi) = \frac{1}{2\delta \log1/\delta}.
\end{equation*} 

\end{prop}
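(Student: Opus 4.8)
The plan is to apply Lemma~\ref{CovSp} directly, so the entire proof reduces to identifying an explicit universal covering map $\pi\colon \mathbb{H}\to \Delta\sm\set{0}$ together with the biholomorphism $m\colon\mathbb{H}\to\Delta$ sending the relevant point to $0$, and then differentiating. First I would recall that the punctured disc $\Delta\sm\set{0}$ is uniformized by the half-plane via an exponential-type map: for instance, taking $\mathbb{H}=\set{\im w>0}$, the map $\pi(w)=e^{iw}=\exp(iw)$ (or equivalently $w\mapsto e^{2\pi i w}$ on the standard strip model, but the $e^{iw}$ normalization onto $\set{0<\abs\cdot<1}$ is cleanest) is a holomorphic covering of $\Delta\sm\set 0$, since $\abs{e^{iw}}=e^{-\im w}\in(0,1)$ and $e^{iw}$ is locally biholomorphic with the deck group generated by $w\mapsto w+2\pi$. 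Given $p\in\Delta\sm\set 0$ with $\abs p=\dd$, I choose $q\in\mathbb{H}$ with $\pi(q)=p$; writing $q=x+iy$ we get $\dd=\abs{e^{iq}}=e^{-y}$, so $y=\log(1/\dd)$, and we may take $q=i\log(1/\dd)$.

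Next I would write down $m$. The biholomorphism $\mathbb{H}\to\Delta$ carrying a point $q\in\mathbb{H}$ to $0$ is the standard Cayley-type Möbius map $m(w)=\dfrac{w-q}{w-\bar q}$; a direct computation gives $m'(w)=\dfrac{q-\bar q}{(w-\bar q)^2}$, hence $\abs{m'(q)}=\dfrac{\abs{q-\bar q}}{\abs{q-\bar q}^2}=\dfrac{1}{\abs{q-\bar q}}=\dfrac{1}{2\im q}=\dfrac{1}{2\log(1/\dd)}$. On the other side, $\pi'(w)=i e^{iw}$, so $\abs{\pi'(q)}=\abs{e^{iq}}=\dd$. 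Plugging into Lemma~\ref{CovSp} with $\norm\xi=1$ yields
\[
F_K^{\Delta\sm\set 0}(p,\xi)=\frac{\abs{m'(q)}}{\abs{\pi'(q)}}=\frac{1/(2\log(1/\dd))}{\dd}=\frac{1}{2\dd\log(1/\dd)},
\]
which is exactly the claimed identity, valid for all $\dd\in(0,1)$ (note $\log(1/\dd)>0$ there, so no smallness is needed).

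I expect the only real point requiring care — the "main obstacle," though it is minor — is verifying that the map I write down is genuinely \emph{the} universal covering (so that Lemma~\ref{CovSp} applies verbatim) and pinning the correct normalization constants: one must be sure the half-plane model, the exponential map, and the Cayley map are matched so that $\pi(q)=p$ holds simultaneously with $m(q)=0$, and that the factor of $2$ in $\im q = \log(1/\dd)$ versus $2\im q$ in $\abs{q-\bar q}$ is tracked correctly. An alternative, essentially equivalent, route avoids Lemma~\ref{CovSp} and instead uses the classical fact that the Poincaré (hyperbolic) metric on $\Delta\sm\set 0$ is $\dfrac{\abs{dz}}{\abs z\log(1/\abs z)}$ up to the standard normalization, together with the coincidence of the Kobayashi metric with the hyperbolic metric on hyperbolic Riemann surfaces; I would mention this as a remark but carry out the covering-map computation as the primary proof since it keeps the paper self-contained and reuses the lemma just proved.
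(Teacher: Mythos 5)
Your proof is correct and is essentially the paper's own argument: both reduce to Lemma~\ref{CovSp} with an exponential covering map of the punctured disc and the standard M\"obius map sending the preimage $q$ to $0$; you merely use the upper half-plane with $\pi(w)=e^{iw}$ where the paper uses the left half-plane with $\pi(z)=e^{z}$, a cosmetic change of model. All normalizations (the factor $2\im q$ versus $\log(1/\delta)$, and $\abs{\pi'(q)}=\delta$) are tracked correctly.
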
 

\begin{proof} 
The map $z\mapsto z e^{i\theta}$ is a self map of $\Delta \setminus \{0\}$, hence it is enough to show the proposition in the case that $p=\delta$. 
Let $ \mathbb{H}_{\text{left}} $ denote the left half plane $ \set{{\rm Re}(z) < 0} \subset \C $.  The covering map is given by $\pi: \mathbb{H}_{\text{left}} \to \Delta\setminus\{0\}, z \mapsto e^z$.  Let $q = \log \delta$. 
Then, $m: \mathbb{H}_{\text{left}} \to \Delta$ is 
\[
m(z) := \frac{z-\log\delta}{z+\log\delta} \text{  \ with \  } m'(q) = \frac{1}{2\log \delta}.  
\]
By  Lemma \ref{CovSp} we have 
\[
F_K^{\Delta \sm \set{0}} (\delta,1) =  \frac{1}{2\delta | \log\delta|}=  \frac{1}{2\delta \log(1/\delta)}. 
\]

\end{proof}


In order to use Lemma \ref{CovSp} to obtain a boundary estimate for $ \W = \C \sm \set{0,1} $ we consider the elliptic modular function as the covering map from the half-plane to $ \W $ and  estimate its derivative. Let $\lambda(\tau) $ be the elliptic modular function. We include some properties and estimates related to $\lambda$ to be used for our purpose.  For a more complete reference on $\lambda$,  see \cite[Ch 7. Section 3.4]{Ahlfors}.

It is known that the elliptic modular function can be written as follows: 

\begin{equation} \label{EMFrac}
\lambda(\tau) = \frac{\sum_{n=-\infty}^\infty \left[ \frac{1}{cos^2(\pi(n-\frac{1}{2})\tau)} - \frac{1}{sin^2(\pi(n-\frac{1}{2})\tau)} \right]}{\sum_{n=-\infty}^\infty \left[ \frac{1}{cos^2(\pi n \tau)} - \frac{1}{sin^2(\pi(n-\frac{1}{2})\tau)} \right]} =: \frac{N(\tau)}{D(\tau)} .
\end{equation}

We first restrict $\lambda$ on a strip shaded in the figure.  The figure shows how  $ \lambda $ maps the  region  and its boundary to the upper half plane.  By the  Schwarz reflections  we can see images of $ \lambda $  will then cover the complex plane except for $\{0, 1\}$ since they are the  images of boundary points $\{0, 1\}$ of the upper half plane. 
 
 \begin{figure}\label{fig1}
  \begin{center}
\includegraphics[scale=.6]{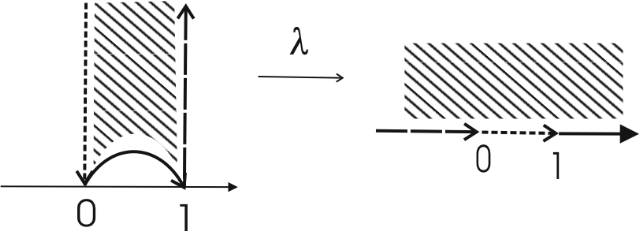}
  \end{center}
    \caption{}
\end{figure}


 We also notice that $ \lambda $ maps the infinity to the origin.   To  estimate the Kobayashi metric on $ \C \sm \set{0,1} $ near the origin at the point $p$,  we will consider the preimage $ q = \lambda^{-1}(p) $ in the upper half-plane  with large positive  imaginary part. 

The following convergence results are known (see \cite[p. 280]{Ahlfors}).  Uniformly with respect to $ {\rm Re}(\tau) $, as $ {\rm Im}(\tau) \to \infty $

\begin{eqnarray}  
D(\tau) \to \pi^2,  \label{denominator} \\
 \lambda(\tau) e^{-i\pi\tau} \to 16.   \label{EMEst}
\end{eqnarray}

\begin{lemma} \label{BdDen}
As $\im(\tau) \to \infty$, the derivatives of  $N(\tau)$  is uniformly bounded by a constant: 
\begin{eqnarray} 
&& \left| \frac{d}{d\tau} D(\tau) \right|  <C
\end{eqnarray} 
for some $C>0$ and  the derivatives of  $D(\tau)$  satisfies the following estimate: 

\begin{eqnarray} 
&& \left| \frac{d}{d\tau} N(\tau) \right| \lesssim |e^{i\pi\tau}| = e^{-\pi \im(\tau)}.
\end{eqnarray} 
\end{lemma}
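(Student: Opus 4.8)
The plan is to bound the two derivatives separately, relying on the series representations in \eqref{EMFrac}. Write
\[
D(\tau) = \sum_{n=-\infty}^\infty \left[ \frac{1}{\cos^2(\pi n \tau)} - \frac{1}{\sin^2(\pi(n-\frac12)\tau)} \right],
\qquad
N(\tau) = \sum_{n=-\infty}^\infty \left[ \frac{1}{\cos^2(\pi(n-\frac12)\tau)} - \frac{1}{\sin^2(\pi(n-\frac12)\tau)} \right],
\]
and differentiate term by term. The key elementary estimates are that for $\tau$ in the upper half-plane, $|\cos(\pi k \tau)|$ and $|\sin(\pi k\tau)|$ both grow like $\tfrac12 e^{\pi|k|\im(\tau)}$ once $|k|\im(\tau)$ is bounded below, and their derivatives are controlled by the same exponential times a factor $\pi|k|$. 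First I would justify differentiating under the summation sign by exhibiting a dominating series that converges uniformly for $\im(\tau)\ge c>0$; this is immediate from the exponential decay $e^{-\pi|k|\im(\tau)}$ of each term (with a polynomial factor $|k|$ from the derivative), which is summable in $k$.

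For the bound on $\frac{d}{d\tau}D(\tau)$: differentiating the $n$-th bracket produces terms of size $\lesssim |n| e^{-2\pi|n|\im(\tau)}$ for $|n|\ge 1$ (using that $\frac{d}{d\tau}\sec^2(\pi n\tau) = 2\pi n \sec^2(\pi n\tau)\tan(\pi n\tau)$ and $|\tan|\to 1$), and the troublesome $n=0$ and $n=1$ terms must be examined by hand. The $n=0$ term of $D$ is $\sec^2(0) - \csc^2(-\tfrac{\pi\tau}{2}) = 1 - \csc^2(\tfrac{\pi\tau}{2})$, whose derivative is bounded (indeed $\to 0$) as $\im(\tau)\to\infty$; the remaining finitely many small-index terms are handled identically. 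Summing the geometric-type tail then gives $|\frac{d}{d\tau}D(\tau)| \le C$ uniformly for $\im(\tau)$ large. (Note the lemma's statement has the roles of $N$ and $D$ swapped in the prose versus the displays; I follow the displayed inequalities, which are the ones actually needed.)

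For the bound on $\frac{d}{d\tau}N(\tau)$: here \emph{every} term involves half-integer frequencies $n-\tfrac12$, so there is no $n=0$ exception and the smallest frequency has $|n-\tfrac12| = \tfrac12$. Differentiating the $n$-th bracket of $N$ gives a quantity of size $\lesssim |n-\tfrac12| e^{-2\pi|n-\frac12|\im(\tau)}$. The dominant contribution comes from $n=0$ and $n=1$, each of size $\lesssim e^{-\pi\im(\tau)} = |e^{i\pi\tau}|$, and summing the rest yields a convergent series bounded by the same quantity. Hence $|\frac{d}{d\tau}N(\tau)| \lesssim e^{-\pi\im(\tau)}$, which is the claimed estimate.

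The main obstacle is purely bookkeeping: one must be careful that the cancellation within each bracket of $D$ (between the $\sec^2$ and $\csc^2$ pieces) is not needed for the derivative bound — unlike for \eqref{denominator} itself, where the bracket structure matters — so the two series can be differentiated and bounded termwise without tracking cancellations. The only genuinely non-routine point is isolating the low-index terms of $D'$ and checking directly that they stay bounded (rather than blowing up) as $\im(\tau)\to\infty$, which follows from $\csc^2(\tfrac{\pi\tau}{2})$ and its derivative decaying exponentially. Everything else is summation of a geometric series.
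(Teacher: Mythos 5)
Your proposal is correct and follows essentially the same route as the paper: termwise differentiation of the two series combined with the elementary exponential bounds $\tfrac14 e^{|y|}<|\sin z|,|\cos z|<e^{|y|}$, giving each differentiated term a bound of the form $|k|e^{-2\pi|k|\im(\tau)}$ and then summing. You are slightly more explicit than the paper about justifying differentiation under the sum, about the low-index terms of $D'$, and about the fact that the roles of $N$ and $D$ are swapped in the lemma's prose, but the substance is identical.
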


\begin{proof} [Proof of Lemma \ref{BdDen}]
For $z = x+ iy$ we have 
$$ \sin(z) = \frac{1}{2i} ( e^{i(x+iy)} - e^{-i(x+iy)} ) = \frac{1}{2i} ( e^{ix} e^{-y} - e^{-ix} e^y).$$ 
Hence, for $|y|>\frac12 \ln2$,  we have 
\begin{eqnarray} 
\frac14 e^{|y|} < |\sin z| < e^{|y|}. \label{eqn44}
\end{eqnarray} 
Similarly, we have 
\begin{eqnarray} 
\frac14 e^{|y|} < |\cos z| < e^{|y|}. \label{eqn45}
\end{eqnarray} 

Using \eqref{eqn44} and \eqref{eqn45} the derivatives of  cosine  terms  of $D(\tau)$   except for the term with  $n=0$  can be estimated as 
\begin{eqnarray} \label{309} 
\biggl |  \frac{d}{d\tau}     \frac{1}{\cos^2(\pi n \tau)} \biggr|  
&=&\biggl| \frac{2\pi n\sin(\pi n \tau)}{\cos^3(\pi n \tau)}  \biggr|  
\lesssim \frac{ |n|}{ e^{2 \pi |n| \im(\tau)}} 
\end{eqnarray} 
as $\im(\tau) \to \infty$.

Similarly, we have 

\begin{equation} \label{310}
\biggl |  \frac{d}{d\tau}    \frac{1}{\sin^2(\pi(n-\frac{1}{2})\tau)}   \biggr| 
= \biggl|   \frac{2 \pi(n-\frac{1}{2})  \cos(\pi(n-\frac{1}{2})\tau)}{\sin^3(\pi(n-\frac{1}{2})\tau)}   \biggr|  
  \lesssim \frac{|n- 1/2|}{ e^{2 \pi |n-1/2| \im(\tau)}}, 
\end{equation} 
and 
\begin{equation} \label{311}
  \biggl |  \frac{d}{d\tau}    \frac{1}{\cos^2(\pi(n-\frac{1}{2})\tau)}   \biggr| 
 =\biggl|   \frac{2 \pi(n-\frac{1}{2})  \sin(\pi(n-\frac{1}{2})\tau)}{\cos^3(\pi(n-\frac{1}{2})\tau)}   \biggr| 
  \lesssim \frac{ |n-1/2|}{ e^{2 \pi |n-1/2| \im(\tau)}} 
\end{equation} 
for all $n\in \Z$ as $\im(\tau) \to \infty$. 

Hence, the derivative of $D(\tau)$   we have

\begin{eqnarray*} 
   \left| \frac{d}{d\tau} D(\tau) \right| 
&\leq &\sum  \left|   \frac{d}{d\tau}   \frac{1}{\cos^2(\pi n \tau)} \right|+ \sum  \biggl|  \frac{d}{d\tau}    \frac{1}{\sin^2(\pi(n-\frac{1}{2})\tau)} \biggr| . 
\end{eqnarray*} 

By \eqref{309} and \eqref{310}, it is uniformly bounded above.  Similarly  the derivative of $N(\tau)$ is 
\begin{eqnarray*}
\left| \frac{d}{d\tau} N(\tau) \right|   &\lesssim&   \sum \biggl|   \frac{d}{d\tau}  \frac{1}{\sin^2(\pi(n-\frac{1}{2})\tau)}   \biggr|  + \sum 
  \biggl |  \frac{d}{d\tau}    \frac{1}{\cos^2(\pi(n-\frac{1}{2})\tau)}   \biggr|  \\
  &\lesssim& \frac{1}{e^{\pi \im(\tau)}}.
  \end{eqnarray*} 

\end{proof}


\begin{proof} [\bf Proof of Theorem \ref{MainThmHyp}]
Let $\Omega = \C\setminus\{0, 1\}$ and let $ \mathbb{H}_{upper} $ denote the upper half-plane $ \set{ {\rm Im}(z) > 0 } \subset \C $.     Let $p$ be a point close to the origin and let    $\lambda : \C\to  \C\setminus\{0, 1\}$  be the  elliptic modular function as the covering map of $\Omega$. From Figure~3.1  we see  that  an inverse image  of $p$  is  a point $q$  of the form $r + iM$ where $M>0, M \to \infty$ as $p \to 0$.   From Schwarz reflection argument   if we allow $r$ to be in $[0,2]$  then $\lambda (q)$ would be approaching  the origin in every direction. 
We estimate the Kobayashi metric at the point $p$ by applying Lemma \ref{CovSp}.   

 The M\"obious transformation  $m: \mathbb{H}_{upper} \to \Delta$ sending $q= r+ iM $ to the origin 
 is given by 
\[
m(z) =  \frac{z - q}{z- \bar q} = \frac{z-(r+iM)}{z-(r-iM)}.
\]
Lemma \ref{CovSp} gives that

\begin{equation} \label{OmegaEst}
F_K^\W (p,\xi) =  \frac{ |m'(q)|}{|\lambda'(q)|} 
= \frac{1}{2M|\lambda'(r+iM)|}.
\end{equation}

Using the notation we used in \eqref{EMFrac} the derivative of $\lambda$  is  
\begin{equation*} 
\lambda '(\tau) = \frac{ N'(\tau)}{D(\tau)}  - \lambda(\tau)  \frac{D'(\tau)}{D(\tau)}. 
\end{equation*} 

From  \eqref{denominator}, \eqref{EMEst}  and Lemma~\ref{BdDen}  we have 

\begin{equation*} 
|\lambda '(\tau)|  \approx   |N'(\tau) - \lambda(\tau) D'(\tau) |  \  \lesssim | \  e^{i\pi\tau} |
\end{equation*} 

as  $ {\rm Im}(\tau) \to \infty $.  Hence combining with  \eqref{OmegaEst} we have 
\begin{equation} \label{OmegaLowerBd}
F_K^\W (p,\xi) \gtrsim \frac{1}{2Me^{-\pi M}}.
\end{equation}

The equation  \eqref{EMEst} gives $ \delta = \text{dist}(p,0) \approx e^{- \pi M}$, that is, $ M \approx \log (1/\delta)$. It follows from \eqref{OmegaLowerBd}  we have lower bound estimate : 
\begin{equation*}
F_K^\W (p,\xi) \gtrsim \frac{1}{\delta \log(1/\delta) }.
\end{equation*}

On the other hand $\Delta\setminus\{0\}$  is a subset of $\C\setminus\{0, 1\}$. By non-increasing property of Kobayashi metric we have 
$$  F_K^\W (p,\xi) \leq F_K^{\Delta \setminus \{0\}} (p,\xi) = \frac{1}{2\delta \log1/\delta}.  $$  
This completes the proof. 
\end{proof}

\begin{remark}
The Kobayashi metric has the same boundary beahvior near $ 1 $ as it does near $ 0 $ on the domain $ \C \sm \set{0,1} $. One can see this by noting that the map $ z \mapsto 1-z $ is a biholomorphism that exchanges the points $ 0 $ and $ 1 $ and leaves length of the tangent vector unchanged.
\end{remark}

\begin{corollary}
Let $\W\subset\C$ be a domain with discrete punctures, i.e., $\W=U\sm J$, where $U$ is a domain in $\C$ and $J$ is a union of discrete points in $U$ with at least two points and  no limit point. As $p\in\W$  approaches  a point $p_j\in J$  the Kobayashi metric on $\Omega$ satisfies the estimate 
 \[
F_K^{\W} (p,\xi) \approx \frac{1}{\dd \log (1/\dd)}
\] 
for $ \|\xi\| = 1 $. 

\end{corollary}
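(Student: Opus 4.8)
The plan is to reduce the general statement to the already-proven Theorem \ref{MainThmHyp} by a localization argument, using the non-increasing property of the Kobayashi metric to sandwich $F_K^\W$ between the metric on a small punctured disc around $p_j$ and the metric on a thrice-punctured plane. First I would fix the puncture $p_j \in J$ that $p$ approaches. Since $J$ has no limit point in $U$, there is a radius $\rho>0$ so that the disc $D(p_j,\rho)$ meets $J$ only in $p_j$ and is compactly contained in $U$; after an affine change of coordinates (a translation and scaling, which only rescales $\|\xi\|$ by a fixed constant) we may assume $p_j=0$ and $\rho=1$, so that $D(p_j,\rho)\sm\set{p_j}=\Delta\sm\set{0}$.

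The upper bound is the easy direction: $\W = U\sm J \subset \C\sm\set{p_j, p_k}$ for any second puncture $p_k\in J$ (which exists since $|J|\ge 2$), and this latter domain is biholomorphic via an affine map to $\C\sm\set{0,1}$. By the non-increasing property under the inclusion and the invariance of length under the affine biholomorphism, $F_K^\W(p,\xi) \le C_1\, F_K^{\C\sm\set{0,1}}(\tilde p,\tilde\xi)$, and Theorem \ref{MainThmHyp} (together with the Remark giving the analogous behavior near any of $0,1$, or here after the affine map, near $p_j$) gives $F_K^\W(p,\xi)\lesssim \tfrac{1}{\dd\log(1/\dd)}$ as $p\to p_j$; here $\dd = \text{dist}(p,p_j)$ and the implied constants absorb the fixed affine distortion. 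For the lower bound, use instead $\W \subset \Delta\sm\set{0}$ after the normalization above: actually the inclusion goes the wrong way for a lower bound directly, so instead I would observe $\W\supset$ nothing useful, and reverse strategy — one has $\Delta\sm\set{0}\not\subset\W$ in general, but $\W \cap D(p_j,1) = D(p_j,1)\sm\set{p_j} = \Delta\sm\set{0}$ is an open subset of $\W$, hence $F_K^\W(p,\xi)\ge$ nothing; the correct move is: $\W\subset\C\sm\set{0,1}$ (via the two punctures $p_j,p_k$, rescaled) only gives an upper bound, so for the lower bound I instead embed $\W$ into $\Delta\sm\set 0$ is false. The clean fix: for the lower bound, note $\W\subset W':=\C\sm\set{p_j}$ is useless; rather, by Theorem \ref{MainThmHyp}'s proof technique, or more simply, since $\W\subset \C\sm\set{0,1}$ is the wrong containment — here is the actual argument: the lower bound follows because $\W$ is a subdomain of $\C\sm\set{p_j,p_k}$, which gives $F_K^\W \ge F_K^{\C\sm\set{p_j,p_k}}$, and the latter behaves like $\tfrac{1}{\dd\log(1/\dd)}$ near $p_j$ by Theorem \ref{MainThmHyp}. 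So both bounds come from the single containment $\W\subset \C\sm\set{p_j,p_k}$ combined with the two-sided estimate in Theorem \ref{MainThmHyp}.

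So concretely the proof is: (i) pick a second puncture $p_k$; (ii) apply the affine biholomorphism $T$ of $\C$ sending $p_j\mapsto 0$, $p_k\mapsto 1$; (iii) since $\W\subset\C\sm\set{p_j,p_k}$ and $T$ is a biholomorphism, $F_K^\W(p,\xi)\ge F_K^{\C\sm\set{p_j,p_k}}(p,\xi) = |T'|\,F_K^{\C\sm\set{0,1}}(T(p), \xi/|T'|)\cdot|T'| $ — more carefully, length under biholomorphism transforms by $F_K^{\C\sm\set{p_j,p_k}}(p,\xi)=F_K^{\C\sm\set{0,1}}(T(p),T'\xi)$, and with $\|\xi\|=1$ this is $F_K^{\C\sm\set{0,1}}(T(p),T'\xi)\approx \tfrac{\|T'\xi\|}{\text{dist}(T(p),0)\log(1/\text{dist}(T(p),0))}$; (iv) since $\text{dist}(T(p),0) = |T'|\,\text{dist}(p,p_j)=|T'|\dd$ and $\|T'\xi\|=|T'|$, the constant $|T'|$ and the $\log$ adjustment $\log(1/|T'|\dd)\approx\log(1/\dd)$ as $\dd\to0$ all get absorbed into the $\approx$; and the matching upper bound already appears in Theorem \ref{MainThmHyp}'s statement. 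The only real point to check is that $\|\xi\|=1$ is in the normalization of Theorem \ref{MainThmHyp} ($\xi=1$ there), and that the affine map distorts $\dd\log(1/\dd)$ only by constants — a routine observation since $\log(1/(c\dd)) = \log(1/\dd) - \log c \sim \log(1/\dd)$. I do not anticipate a genuine obstacle here; the corollary is essentially a restatement of Theorem \ref{MainThmHyp} via monotonicity, and the main thing is to phrase the affine normalization and the absorption of constants cleanly.
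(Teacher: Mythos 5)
There is a genuine gap: your upper bound is obtained from the wrong direction of the non-increasing property. The containment $\W\subset\C\sm\set{p_j,p_k}$ makes the inclusion a holomorphic map from the \emph{smaller} domain into the larger one, so it yields $F_K^{\W}(p,\xi)\ge F_K^{\C\sm\set{p_j,p_k}}(p,\xi)$ --- the metric on a subdomain is \emph{larger}. This correctly gives the lower bound $F_K^{\W}\gtrsim \frac{1}{\dd\log(1/\dd)}$ (as you eventually conclude in the second half of your argument), but your opening claim that the same containment gives $F_K^{\W}(p,\xi)\le C_1 F_K^{\C\sm\set{0,1}}(\tilde p,\tilde\xi)$ is backwards, and your closing sentence --- that ``both bounds come from the single containment $\W\subset\C\sm\set{p_j,p_k}$'' --- cannot be right in principle: a single inclusion produces a one-sided inequality only, no matter how sharp the two-sided estimate on the ambient domain is.

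The missing ingredient is the \emph{reverse} containment, which you actually set up in your first paragraph and then discard as giving ``nothing'': since $J$ is discrete, $\Delta(p_j,\rho)\sm\set{p_j}\subset\W$ for some $\rho>0$, and the non-increasing property applied to this inclusion gives $F_K^{\W}(p,\xi)\le F_K^{\Delta(p_j,\rho)\sm\set{p_j}}(p,\xi)$. Proposition \ref{WPrime} (after rescaling by $\rho$, which only shifts the logarithm by the constant $\log\rho$) evaluates the right-hand side as $\frac{1}{2\dd\log(\rho/\dd)}\approx\frac{1}{\dd\log(1/\dd)}$, completing the upper bound. This is exactly the paper's sandwich $\Delta(p_j,r)\sm\set{p_j}\subset\W\subset\C\sm\set{p_j,p_{j'}}$, with Proposition \ref{WPrime} supplying the upper estimate and Theorem \ref{MainThmHyp} the lower one. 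Your treatment of the affine normalization and the absorption of constants into $\approx$ is fine; only the source of the upper bound needs to be corrected.
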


\begin{proof}
Let $ p_{j'} \in J $ with $ j' \neq j $. Then since the points in $J$ are discrete there exists a disk $ \Delta(p_j,r) $ of radius $r$ centered at $p_j$ such that $ \Delta(p_j,r) \sm \set{p_j} \subset \W  $. We have the following inclusions 
\[
\Delta(p_j,r) \sm \set{p_j} \subset \W  \subset \C \sm \set{p_j,p_{j'}}
\]
giving us  corresponding inequalities of the metrics:
\[
F_K^{\Delta(j,r) \sm \set{j}} \ge F_K^{\W} \ge F_K^{\C \sm \set{j,j'}}.
\]
Proposition~\ref{WPrime} and Theorem \ref{MainThmHyp} imply the Corollary. 
\end{proof}

%
%

\section{Estimation of the Bergman Metric Near the Inner Boundary of the Ring
Domain} \label{Ring}

We show first that if $\Omega$ is a bounded  domain in $\C^n$ and $K$ is a relatively compact subset of $\Omega$ then the Bergman metric on $\W\sm K$ is comparable to the Bergman metric on $\W$. Hence the metric does not blow up on $\W\sm K$ near $\partial K$.   We denote the space  of square integrable functions holomorphic  in $\Omega$ by $L^2_h(\Omega)$. 
\begin{prop}\label{627}
Let $\W\subset\subset\C^n, n\geq 2$ be a bounded domain and $K\subset\subset\W$ be a domain in $\C^n$. Then there exists a constant $d$, $0<d<1$ depending on $K$ such that 
 
 $$
\sqrt{1-d^2}F_B^\W(z,\xi)\le F_B^{\W\sm K}(z,\xi)\le \frac{1}{\sqrt{1-d^2}} F_B^\W(z,\xi).
$$

\end{prop}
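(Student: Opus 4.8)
The plan is to exploit the fact that removing a relatively compact set $K$ from $\W$ does not change the space of $L^2$ holomorphic functions very much, and then to transfer this to an estimate of the Bergman kernels and ultimately the Bergman metrics. First I would observe that restriction gives an inclusion $L^2_h(\W)\hookrightarrow L^2_h(\W\sm K)$, and conversely that every $f\in L^2_h(\W\sm K)$ extends holomorphically across $K$ (this is essentially Hartogs-type removability, valid since $n\ge 2$ and $K$ is relatively compact with connected complement in a neighborhood — more carefully, one uses that $\W\sm K$ is connected and $\p(\W\sm K)\supset \p K$, so an $L^2$ function has a removable singularity along the compact set $\cl K$). The key quantitative input is then: for $f\in L^2_h(\W)$,
\[
\norm{f}^2_{L^2(\W\sm K)} \le \norm{f}^2_{L^2(\W)} = \norm{f}^2_{L^2(\W\sm K)} + \norm{f}^2_{L^2(K)},
\]
and one wants to bound $\norm{f}^2_{L^2(K)}$ by a fixed fraction $d^2$ of $\norm{f}^2_{L^2(\W)}$, uniformly in $f$. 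Since $K\subset\subset\W$, the sub-mean-value property for $|f|^2$ on balls of a fixed radius (depending on $\mathrm{dist}(K,\p\W)$) gives a pointwise bound $|f(z)|^2\lesssim \norm{f}^2_{L^2(\W)}$ for $z\in K$, hence $\norm{f}^2_{L^2(K)}\le C\,\Vol(K)\,\norm{f}^2_{L^2(\W)}$. This does not immediately give $d<1$, but one can instead argue that the identity map $L^2_h(\W)\to L^2_h(\W\sm K)$ is an isomorphism of Hilbert spaces whose inverse has norm $\le (1-d^2)^{-1/2}$ for some $d<1$ — equivalently $\norm{f}^2_{L^2(K)} \le d^2 \norm{f}^2_{L^2(\W)}$ — which follows because $L^2_h(\W)$ is a \emph{closed} subspace argument: the restriction map is injective with closed range (range is all of $L^2_h(\W\sm K)$ by removability), so by the open mapping theorem it is bounded below, i.e. there is $c>0$ with $\norm{f}_{L^2(\W\sm K)}\ge c\norm{f}_{L^2(\W)}$; setting $d^2 = 1-c^2$ gives $0<d<1$.

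Next I would translate the norm comparison into a kernel comparison. Writing $K_\W(z,z) = \sup\{|f(z)|^2 : f\in L^2_h(\W),\ \norm{f}_{L^2(\W)}\le 1\}$ and similarly for $\W\sm K$, the inclusion $L^2_h(\W)\subset L^2_h(\W\sm K)$ together with $\norm{\cdot}_{L^2(\W\sm K)}\le \norm{\cdot}_{L^2(\W)}$ gives $K_{\W\sm K}(z,z)\ge K_\W(z,z)$, while the reverse inclusion together with $\norm{\cdot}_{L^2(\W)}\le (1-d^2)^{-1/2}\norm{\cdot}_{L^2(\W\sm K)}$ gives $K_{\W\sm K}(z,z)\le (1-d^2)^{-1}K_\W(z,z)$. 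Thus
\[
K_\W(z,z)\le K_{\W\sm K}(z,z)\le \frac{1}{1-d^2}K_\W(z,z), \qquad z\in\W\sm K.
\]
The subtlety here — and what I expect to be the main obstacle — is passing from this \emph{pointwise} two-sided bound on $\log K$ to a two-sided bound on the \emph{complex Hessian} $\p\dbar\log K$, i.e. on the Bergman metric itself. A pointwise bound on a function says nothing about its second derivatives in general; one needs the extremal characterization of the Bergman metric. I would use the standard formula
\[
F_B^\W(z,\xi)^2 = \frac{\sup\{|f'(z)\cdot\xi|^2 : f\in L^2_h(\W),\ \norm f_{L^2(\W)}\le 1,\ f(z)=0\}}{K_\W(z,z)},
\]
(see \cite{Pflug}). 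Applying the same inclusion-and-norm-comparison argument to the numerator: the constrained sup over $L^2_h(\W)$ and over $L^2_h(\W\sm K)$ differ by at most the factor $(1-d^2)^{\pm 1}$, just as the kernels did — because the constraint $f(z)=0$ is preserved under both inclusions and the norm distortion is the same. Combining the numerator comparison (factor between $1$ and $1/(1-d^2)$) with the denominator comparison (factor between $1$ and $1/(1-d^2)$, in the opposite sense), the ratios telescope so that $F_B^{\W\sm K}(z,\xi)^2$ lies between $(1-d^2)F_B^\W(z,\xi)^2$ and $\frac{1}{1-d^2}F_B^\W(z,\xi)^2$, which is exactly the claimed estimate after taking square roots. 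I would be careful to check that $K_\W$ is nonvanishing on $\W\sm K$ (so the Bergman metric is defined there) — this holds since $K_{\W\sm K}\ge K_\W > 0$ for a bounded domain $\W$.

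One technical point I would want to nail down is the removability/extension step, since that is where $n\ge 2$ is genuinely used and it is what makes $L^2_h(\W\sm K)$ coincide (isometrically up to the stated constants) with a space of functions on all of $\W$: by Hartogs' extension theorem every function holomorphic on $\W\sm K$ extends to $\W$ provided $\W\sm K$ contains a neighborhood of $\p\W$ and $\W$ is connected, which is automatic here since $K\subset\subset\W$. The $L^2$ control of the extension on $K$ then comes from interior estimates (Cauchy estimates on a slightly larger compact set). I would present the argument in the order: (1) set up the two restriction maps and invoke Hartogs extension; (2) use the open mapping theorem / interior estimates to get the uniform lower bound $\norm{f}_{L^2(\W\sm K)}\ge \sqrt{1-d^2}\,\norm{f}_{L^2(\W)}$ and hence identify the constant $d\in(0,1)$; (3) deduce the kernel comparison; (4) deduce the numerator comparison in the extremal formula for $F_B$; (5) combine to get the metric inequality. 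The only place where real care is needed is step (2), ensuring $d$ can be taken strictly less than $1$ and that it depends only on $K$ (and $\W$), not on $z$ or $\xi$.
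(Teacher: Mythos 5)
Your proposal is correct and follows essentially the same route as the paper: reduce to the extremal formulas for the numerator $b_\W^2(z,\xi)$ and the denominator $K_\W(z,z)$, use Hartogs extension (valid since $n\ge 2$) to identify $L^2_h(\W\sm K)$ with $L^2_h(\W)$ up to a uniform norm distortion $\norm f^2_{L^2(K)}\le d^2\norm f^2_{L^2(\W)}$, and combine the two comparisons. The only real difference is how you obtain the constant $d<1$: you invoke the open mapping theorem for the restriction map $L^2_h(\W)\to L^2_h(\W\sm K)$ (which requires checking surjectivity, i.e.\ that the Hartogs extension is again $L^2$, via interior estimates), whereas the paper argues by contradiction with a normal-families/weak-compactness argument, extracting a limit $F$ with $\norm F_{L^2(K)}=1$ and $F=0$ a.e.\ on $\W\sm K$; both are sound, and your version makes slightly more explicit where the completeness of $L^2_h$ and the $L^2$ bound on the extension enter.
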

\begin{proof}
We use the following property for the Bergman metric: 
\begin{equation} \label{622}
\left(F_B^\W(z,\xi)\right)^2=\frac{b_\W^2(z,\xi)}{K_\W(z,z)}
\end{equation} 
where
\begin{eqnarray*}
&&b_\W^2(z,\xi)=\sup\set{\abs{\inner{\p f,\xi}}^2:f\in L_h^2(\W), f(z)=0, \norm f_{L^2(\W)}=1}\\
&& \text{and }\\
&&K_\W(z,z)=\sup\set{|g(z)|^2:\norm g_{L^2(\W)}=1, g\in L_h^2(\W)}.
\end{eqnarray*} 

Every holomorphic function $f$ on $\Omega\setminus K$ holomorphically extends to $\Omega$. 
We use the same $f$ for extension.  

We shall show that there exists $d$ such that 
$$
\norm f_{L^2(K)}<d <1 \quad \text{ for all\ }  f\in L^2_h(\Omega) \text{ \ with \ } \norm f_{L^2(\W)}=1. 
$$

If not, there exists a sequence $f_n$ with $ \norm f_{L^2(K)}\geq 1-1/n$ for all $n \in \mathbb N$. Since $L^2$-norm of $f_n$ is uniformly bounded, $|f(z)|$ is also uniformly bounded for all $z\in K$.   Hence there exists a convergent subsequence, call it again as $f_n$,  whose limit function $F$ is also  in $ L_h^2(\W)$. 
We have 
$$ \|F\|_{L^2(K)} = \lim  \|f_n\|_{L^2(K)}  = 1 \text{\  whereas \ } \|F\|_{L^2(\Omega)} \leq 1$$ 
 implying $F = 0$ almost everywhere on   $\W\sm K$,  hence $F\equiv 0$. This contradicts  $  \|F\|_{L^2(K)} = 1$. \\

We have for $f\in L_h^2(\W) \setminus \{0\}$
\begin{equation*} 
(1-d^2)\norm f_{L^2(\W)}^2<\norm f_{L^2(\W\sm K)}^2<\norm f_{L^2(\W)}^2. 
\end{equation*} 
Equivalently,   
\begin{equation} \label{623} 
\frac{1}{\norm f_{L^2(\W)}^2} <  \frac{1}{\norm f_{L^2(\W\sm K)}^2}  < \frac{1}{(1-d^2)\norm f_{L^2(\W)}^2} . 
\end{equation} 
From \eqref{622} and \eqref{623}  we have proposition. 
\end{proof}

%
%

We now  consider the case that $\Omega$ is the unit ball in $\C^2$ and $K$ is a ball of smaller radius centered at the origin.  In this special case we can obtain a specific  constant {\it d} using the orthonormal basis. 
We let  $ \B=\set{z\in\C^2:|z_1|^2+|z_2|^2<1}$ and $\W_r=\set{z\in\C^{2}:r^{2}<|z_1|^{2}+|z_2|^2<1}$. 
Let $ \{ a_{jk}z_{1}^jz_2^k \}_{ j\geq 0, k\geq 0} $ be an orthonormal basis of $L_{h}^{2}(\B)$.  
Then we can easily check that    $ \{\frac{1}{r^{j+k+2}} a_{jk}z_{1}^jz_2^k  \}_{ j\geq 0, k\geq 0} $ is  an orthonormal basis of $L_{h}^{2}(r\B)$.  If  $f(z) = \sum c_{jk} z_1^j z_2^k$  then we have 
$$ \|f \|^2_{L^2( r\mathbb B)} = \sum \frac{|c_{jk}|^2}{ |a_{jk}|^2}  r^{2j+2k+ 4} < r^4  \sum \frac{|c_{jk}|^2}{ |a_{jk}|^2}   = r^4  \|f \|^2_{L^2( \mathbb B)}.$$
Hence, we can take $d = r^2$ and apply Proposition~\ref{627} to have 
\begin{equation}\label{786}  
\sqrt{1-r^4}F_B^{\mathbb B} (z,\xi) \le F_B^{\W_r}(z,\xi)\le \frac{1}{\sqrt{1-r^4}} F_B^{\mathbb B}(z,\xi).
\end{equation}  
One can see that if $\Omega$ is the unit ball in $\C^n$, then we can take $d=r^n$ and have similar inequalities to \eqref{786}. 
In the rest of this section we focus on improving the upper bound.   We prove that  the Bergman metric on $\W_{r}$ is {\it strictly smaller}  than the Bergman metric on $\B$ near the inner boundary when the inner radius  $r$ is small enough.    
To do the comparison we  express the Bergman kernel $K_{\Omega_r}(z,z)$  in terms of the renormalized basis of $L^2_h(\mathbb B)$ and compute the Levi form of $\log K_{\Omega_r}$ in normal and tangential directions.  Let $ \{ a_{jk}'z_{1}^{j}z_2^k \}_{ j\geq 0, k\geq 0} $ be an orthonormal basis of $L_{h}^{2}(\W_r)$.  The coefficients $a'_{jk}$ is a multiple of $a_{jk}$:
\begin{equation} \label{eq411}
 a'_{jk} = a_{jk} \frac{1}{\sqrt{1-r^{2(j+k)+4}}}.
 \end{equation} 
 
This is because 
\begin{eqnarray*} 
\int_{\Omega_r}  |z_{1}|^{2j} |z_2|^{2k} d\Vol  &= &   \int_{\mathbb B} |z_{1}|^{2j} |z_2|^{2k} d\Vol    - \int_{\|z\|<r}  |z_{1}|^{2j} |z_2|^{2k} d\Vol    \\[0.1cm]
& =&  \int_{\mathbb B} |z_{1}|^{2j} |z_2|^{2k} d\Vol    -r^{2(j+k)+4}  \int_{\mathbb B}  |z_{1}|^{2j} |z_2|^{2k} d\Vol. 
\end{eqnarray*} 
The Bergman kernel $K_{\Omega_r}(z,z)$ is   $ \sum | a'_{jk}|^2 |z_{1}|^{2j}|z_{2}|^{2k}$.  
For later  convenience we compute the derivatives of $K_{\Omega_r}$: 

\begin{equation} \label{KernelDerivative}
\frac{\p^{2}\log K_{\W}(z,z)}{\p z_{i}\p\cl z_{j}}=\frac{1}{K^{2}_{\Omega}}\Bigg(K_{\W}(z,z)\frac{\p^{2}K_{\W}(z,z)}{\p z_{i}\p\cl z_{j}}-\frac{\p K_{\W}(z,z)}{\p z_{i}}\frac{\p K_{\W}(z,z)}{\p\cl z_{j}}\Bigg).
\end{equation}\\

We  now prove Theorem~\ref{602} that the Bergman metric on the ring domain is {\it strictly smaller}  than the Bergman metric on the unit ball in the tangential direction. \\
\begin{proof}[{\bf Proof of Theorem~\ref{602}}  ] 
Since the Bergman metric is invariant under biholomorphic mappings,
we may assume $z_{0}=(x,0)$, $x=r+\epsilon, \epsilon>0$ and $T=(0,1)$. Let $c_{j}'=|a_{j1}'|^{2}$
and $c_{j}=|a_{j1}|^{2}$. Then using equation \eqref{KernelDerivative} we have 

\[
(F_{B}^{\W}(z_{0},T))^{2} =\frac{\p^{2}\log K_{\W}(z,z)}{\p z_{2}\p\cl z_{2}}\Bigg|_{z=z_{0}} 
=\frac{\sum_{j=0}^{\infty}c_{j}'x^{2j}}{\sum_{j=0}^{\infty}b_{j}'x^{2j}}.
\]

Similarly  we have 

\[
(F_{B}^{\B}(z_{0},T))^{2}= \frac{\sum_{j=0}^{\infty}c_{j}x^{2j}}{\sum_{j=0}^{\infty}b_{j}x^{2j}}.
\]

Again from the equation \eqref{eq411}, $a'_{jk} = a_{jk}/\sqrt{1-r^{2(j+k)+4}}$  with $k=1$  we have 

\begin{equation} \label{BetaJ} 
c_{j}'=\beta_{j}c_{j}   \text{\  where \ } \beta_{j}=\frac{1}{1-r^{2j+6}}.
\end{equation}

The $c_{j}$'s can be written explicitly : 
\[
c_{j}=\frac{(j+1)(j+2)(j+3)}{\pi^{2}}.
\]
 See \cite[p. 172] {Pflug}.  Recall that the formulas for $ b_j , \gamma_j, $ and $ \beta_j $ are given by equations (\ref{BJ}) and (\ref{BetaJ}) and that $ b_j' = b_j \gamma_j $. We want to show the following:

\[
F_{\W}(z_{0},T)=\frac{\sum_{j=0}^{\infty}\beta_{j}c_{j}x^{2j}} {\sum_{j=0}^{\infty}\gamma_{j}b_{j}x^{2j}}<\frac{\sum_{j=0}^{\infty}c_{j}x^{2j}} {\sum_{j=0}^{\infty}b_{j}x^{2j}}=F_{\B}(z_{0},T). 
\]

We cross multiply to have  
\begin{eqnarray*}
&&\sum\beta_{j}c_{j}x^{2j}\sum b_{k}x^{2k}-\sum c_{j}x^{2j}\sum\gamma_{k}b_{k}x^{2k}\\
&=&\sum_{j}(\beta_{j}-\gamma_{j})b_{j}c_{j}x^{4j}+\sum_{j>k\ge0} (\beta_{j}c_{j}b_{k}+\beta_{k}c_{k}b_{j}-c_{j}\gamma_{k}b_{k}-c_{k} \gamma_{j}b_{j})x^{2(j+k)}\\
&=&\sum_{j}(\beta_{j}-\gamma_{j})b_{j}c_{j}x^{4j}+ \sum_{j>k\ge0}\bk{c_{j}b_{k}(\beta_{j}-\gamma_{k})+c_{k}b_{j} (\beta_{k}-\gamma_{j})}x^{2(j+k)}
\end{eqnarray*}

 Note that the terms in the first summation are all negative since
$\beta_{j}<\gamma_{j}$. Let 

\[
A_{jk}=c_{j}b_{k}(\beta_{j}-\gamma_{k}) +c_{k}b_{j}(\beta_{k}-\gamma_{j}),
\]

Then for every $j, k$, 

\begin{eqnarray*}
A_{jk}&=&\frac{(j+2)!(k+2)!}{\pi^{4}j!k!} \br{(j+3)(\gamma_{j}-\beta_{k})+(k+3)(\gamma_{k}-\beta_{j})}\\
&=&\frac{(j+2)!(k+2)!}{\pi^{4}j!k!} \br{(j+3)\Bigg(\frac{r^{2j+6}-r^{2k+4}}{(1-r^{2j+6})(1-r^{2k+4})}}\\
&&+(k+3)\bk{\frac{r^{2k+6}-r^{2j+4}}{(1-r^{2k+6})(1-r^{2j+4})}}\Bigg)\\
&=&\frac{(j+2)!(k+3)!r^{2k+4}}{\pi^{4}j!k!} \Bigg[-\frac{j+3}{k+3}\br{\frac{1-r^{2(j-k)+2}}{(1-r^{2j+6})(1-r^{2k+4})}}\\
&&+r^{2}\br{\frac{1-r^{2(j-k-1)}}{(1-r^{2k+6})(1-r^{2j+4})}}\Bigg]\\
&=&\frac{(j+2)!(k+3)!r^{2k+4}}{\pi^{4}j!k!}\bk{B_{jk}}
\end{eqnarray*}

 The term $B_{jk}<0$ if 

\be
B:=r^{2}\frac{1-r^{2(j-k-1)}}{1-r^{2(j-k)+2}} \frac{(1-r^{2j+6})(1-r^{2k+4})}{(1-r^{2k+6})(1-r^{2j+4})}<\frac{j+3}{k+3}, \quad\forall j>k\ge0.\label{510}
\ee

Since $j>k$, it is sufficient to show that $B<1$.

If $j-k=1$, then $B=0$. Hence \eqref{510} is satisfied. If $j-k>1$, then we have the following:
$$
B=r^2\frac{h_{2(j-k)-3}}{h_{2(j-k)+1}}\frac{h_{2j+5}}{h_{2k+5}}\frac{h_{2k+3}}{h_{2j+3}},
$$
where $h_n=1+r+\cdots+r^{n}$. Since $h_n/h_m<1$ if $n<m$, it is enough to show that
\be\label{533}
\frac{h_{2j+5}}{h_{2k+5}}\frac{h_{2k+3}}{h_{2j+3}}<1.
\ee
Since $a/b<(a+\e)/(b+\e)$, if $0<a<b$ and $\e>0$, we get the following inequality:
$$
\frac{h_{2k+3}}{h_{2j+3}}<\frac{h_{2k+3}+r^{2j+4}+r^{2j+5}}{h_{2j+5}}<\frac{h_{2k+5}}{h_{2j+5}},
$$
where the second inequality follows from $j>k$ and $r\in(0,1)$. Hence \eqref{533} is proved. 

Therefore we have $B_{jk}<0$ for
all $j>k\ge0$ and $F_K^{\W_r}(z_{0},T)\lneq F_{B}^\B(z_{0},T)$.

\end{proof}
Theorem~\ref{602} can be easily generalized  to higher dimensions and also to a polydisc minus polydisc. We do not know at this point whether the same estimate holds on other rings of Reinhardt domains.

In the normal direction we have the estimate for the ring domain in $\C^2$ when the inner boundary is close to zero. Our computation is restricted to the case of the ring domain in $\C^2$. 
\begin{prop} \label{ringnormal}
Let $\mathbb B^2$ be the unit ball in $\C^2$ and $\Omega $ be the ring domain, $\Omega = \{ z\in \C^2 \ : \ r< |z| < 1\}, r\in (0, 1)$. 
Let $z_0\in \Omega$.  For sufficiently small $r>0$  we have 
\[
F_B^{\W_{r}}(z_{0},N)\lneq F_B^{\B^2}(z_{0},N),\quad N=z_0/|z_0|,
\]
for $z_{0}$ close to the inner boundary of $\W_{r}$. 
\end{prop}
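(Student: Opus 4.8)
One natural route is to recognise the normal–direction Bergman metric as the square root of a variance, and then to read the comparison off the mixture structure of the renormalised coefficients in \eqref{eq411}. First I would use biholomorphic invariance to reduce to $z_0=(x,0)$ with $x=|z_0|$ and $N=(1,0)$, and set $t=x^2$. Writing $b_m=|a_{m0}|^2$ and $b_m'=|a_{m0}'|^2$, the ball kernel formula from Section~\ref{Definitions and Background} gives $b_m=(m+1)(m+2)/\pi^2$, and \eqref{eq411} with $k=0$ gives $b_m'=b_m/(1-r^{2m+4})$. On the slice $z_2=0$ only the monomials $z_1^m\bar z_1^m$ of $K_{\W_r}(z,z)=\sum_{m,k}|a_{mk}'|^2|z_1|^{2m}|z_2|^{2k}$ contribute to $K_{\W_r}$ and its $z_1$–derivatives, so $K_{\W_r}(z_0,z_0)=\sum_m b_m't^m$, $\p_{z_1}K_{\W_r}\big|_{z_0}=\sum_m mb_m'x^{2m-1}$, $\p_{z_1}\p_{\bar z_1}K_{\W_r}\big|_{z_0}=\sum_m m^2b_m'x^{2m-2}$. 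Substituting into \eqref{KernelDerivative} and pulling out $x^{-2}$ turns the Levi form of $\log K_{\W_r}$ in the normal direction into a variance:
\[
\br{F_B^{\W_r}(z_0,N)}^2=\frac{1}{t}\,\mathrm{Var}(J_r),\qquad \br{F_B^{\B^2}(z_0,N)}^2=\frac{1}{t}\,\mathrm{Var}(J),
\]
where $J_r$ (resp.\ $J$) is the $\Z_{\ge0}$–valued variable with weight proportional to $b_m't^m$ (resp.\ $b_mt^m$); the ball case evaluates to $\mathrm{Var}(J)=3t/(1-t)^2$, in agreement with \eqref{eq123}. It therefore remains to prove the strict inequality $\mathrm{Var}(J_r)<\mathrm{Var}(J)$.

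The key step is to expand $1/(1-r^{2m+4})=\sum_{\ell\ge0}r^{4\ell}r^{2m\ell}$ (legitimate since all terms are positive and $t<1$), which rewrites $b_m't^m=\sum_{\ell\ge0}r^{4\ell}b_m(tr^{2\ell})^m$. Setting $\phi(s)=\sum_m b_ms^m=2/(\pi^2(1-s)^3)$ and $s_\ell=tr^{2\ell}$, this presents the law of $J_r$ as a mixture $\sum_{\ell\ge0}\lambda_\ell q^{(\ell)}$, where $\lambda_\ell=r^{4\ell}\phi(s_\ell)/\sum_{\ell'}r^{4\ell'}\phi(s_{\ell'})$ and $q^{(\ell)}$ is the ``ball'' law with parameter $s_\ell$, whose mean and variance are $\nu(s_\ell)=3s_\ell/(1-s_\ell)$ and $V(s_\ell)=3s_\ell/(1-s_\ell)^2$ (a direct computation from $\phi$). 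By the law of total variance, with $L\sim\lambda$,
\[
\mathrm{Var}(J_r)=\sum_{\ell\ge0}\lambda_\ell V(s_\ell)+\mathrm{Var}_\lambda\br{\nu(s_L)}=V(t)-\sum_{\ell\ge1}\lambda_\ell\bk{V(t)-V(s_\ell)}+\mathrm{Var}_\lambda\br{\nu(s_L)},
\]
using $s_0=t$. Since $r\in(0,1)$, each $\lambda_\ell$ with $\ell\ge1$ is positive and $s_\ell<t$, so the middle sum is strictly positive; bounding the variance by the second moment about $\nu(t)$ gives $\mathrm{Var}_\lambda(\nu(s_L))\le\sum_{\ell\ge1}\lambda_\ell[\nu(t)-\nu(s_\ell)]^2$. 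Hence it suffices to establish the term-by-term inequality $V(t)-V(s)>[\nu(t)-\nu(s)]^2$ for $0<s<t$.

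Here the elementary identities $V(t)-V(s)=3(t-s)(1-ts)/[(1-t)^2(1-s)^2]$ and $\nu(t)-\nu(s)=3(t-s)/[(1-t)(1-s)]$ reduce the required inequality to $1-ts-3(t-s)>0$, i.e.\ $1-3t+s(3-t)>0$, which holds whenever $t=|z_0|^2\le1/3$ (then $1-3t\ge0$ and $s(3-t)>0$). Since $r$ is small and $z_0$ lies close to the inner boundary $\{|z|=r\}$, one indeed has $|z_0|^2\le1/3$, so this holds for every $s=s_\ell$, $\ell\ge1$; summing against $\lambda$ gives $\sum_{\ell\ge1}\lambda_\ell[V(t)-V(s_\ell)]>\mathrm{Var}_\lambda(\nu(s_L))$, whence $\mathrm{Var}(J_r)<V(t)=\mathrm{Var}(J)$ and the proposition follows.

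The only genuine insight is the first step: spotting that the normal–direction metric is a variance and that \eqref{eq411} converts it into a mixture of rescaled ball laws; after that the argument is routine algebra. Smallness is used exactly once, in the last step, where the term-by-term comparison needs $|z_0|^2\le1/3$ --- precisely what ``$r$ small, $z_0$ near the inner boundary'' provides. I expect the main obstacle, were one to attempt to drop the smallness hypothesis, to be controlling the positive between-group term $\mathrm{Var}_\lambda(\nu(s_L))$ against the deficit $\sum_{\ell\ge1}\lambda_\ell[V(t)-V(s_\ell)]$ when $|z_0|$ is close to $1$, where it is unclear that the inequality survives.
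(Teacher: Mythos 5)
Your proof is correct, and it takes a genuinely different route from the paper's. The paper argues by brute force: it clears denominators in the inequality $(F_B^{\W_r}(z_0,N))^2<3/(1-x^2)^2$, expands the resulting difference as a power series $\sum_{l\ge0}C_lx^{2l}$, computes $C_0$, $C_1$, $C_2$ explicitly, and checks that the negative leading terms ($C_0=O(r^4)$, $C_1x^2=O(r^8)$) dominate the tail when $r$ is small and $x$ is close to $r$. You instead identify $(F_B^{\W_r}(z_0,N))^2$ with $t^{-1}\operatorname{Var}(J_r)$ for the law $\Pr(J_r=m)\propto b_m't^m$, expand each factor $(1-r^{2m+4})^{-1}$ as a geometric series so that this law becomes a mixture of the ball laws at the rescaled parameters $s_\ell=tr^{2\ell}$, and finish with the law of total variance plus the two-point inequality $V(t)-V(s)>[\nu(t)-\nu(s)]^2$, which reduces to $1-3t+s(3-t)>0$. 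I verified the key identities ($\phi(s)=2/(\pi^2(1-s)^3)$, $\nu(s)=3s/(1-s)$, $V(s)=3s/(1-s)^2$, the factorizations of $V(t)-V(s)$ and $\nu(t)-\nu(s)$, and the variance formula for $\p_{z_1}\p_{\bar z_1}\log K$ on the slice $z_2=0$); all interchanges of summation are justified by positivity, and the strictness comes from $\lambda_\ell>0$ for $\ell\ge1$. What your approach buys is an explicit, checkable region of validity --- the inequality holds for every $r\in(0,1)$ and every $z_0\in\W_r$ with $|z_0|^2\le1/3$ --- in place of the paper's unquantified smallness hypotheses, and it sidesteps the uniform tail estimates that the paper's $O(r^{2l})$ bookkeeping leaves implicit. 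As you note, both arguments break down when $|z_0|$ is near $1$ (your pointwise condition $1-3t+s(3-t)>0$ fails for small $s$ once $t>1/3$), so neither settles the normal-direction comparison on all of $\W_r$.
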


\begin{proof}
Because the Bergman metric is invariant under biholomorphic mappings we may assume $z_{0}=(x,0)$ with   $x=r+\epsilon$,
$\e>0$.
Let $b_{j}=|a_{j0}|^{2}$ and $b_{j}'=|a_{j0}'|^{2}$.  The coefficients  $a_{jk}$s are  known explicitly, see \cite[p. 172]{Pflug}.   
 From \eqref{eq411} with $k=0$    we have
\begin{equation} \label{BJ} 
b_j=\frac{1}{\pi^2} (j+1)(j+2)  \text{ \ and \ } b_{j}'=\gamma_{j}b_{j}  \text{ \ where \ } \gamma_{j}=\frac{1}{1-r^{4+2j}}. 
\end{equation}

Let $z_{0}=(x,0)$, $x\in (r, 1)$. Then we have

\begin{equation}
K_{\W_r}(z_{0},z_{0})=\sum_{j=0}^{\infty}b_{j}'|z_{1}^{j}|^{2}= \sum_{j=0}^{\infty}b_{j}'x^{2j}.\label{eq:Kw}
\end{equation}

For notational convenience we use $\Omega= \Omega_r$. 
 Using  \eqref{KernelDerivative} and $N=(1,0)$  one can calculate the Bergman metric on $\Omega$ as follows:
\begin{multline*}
\left(F_B^{\W}(z_{0},N)\right)^{2}=\p\cl\p\log K_{\W}(z,z)_{z_{0}}(N,\cl N)=\frac{\p^{2}\log K_{\W}(z,z)}{\p z_{1}\p\cl z_{1}}\Bigg|_{z=z_{0}}\\
=\frac{1}{K_{\W}(z_{0})^{2}}\br{\sum_{j=0}^{\infty} b'_{j}x^{2j}\sum_{k=1}^{\infty}b'_{k}k^{2}x^{2k-2} -\br{\sum_{k=1}^{\infty}b'_{k}kx^{2k-1}}^{2}}
\end{multline*}

 We can simplify this as 
 \begin{eqnarray*}
 F_B^{\W}(z_{0},N)^{2}  K_{\W}(z_{0})^{2} 
&=&   \sum_{j=0}^{\infty}b'_{j}x^{2j} \sum_{k=1}^{\infty}b'_{k}k^{2}x^{2k-2}- \br{\sum_{k=1}^{\infty} b'_{k}kx^{2k-1}}^{2}\\[0.1cm] 
&=&  (b'_0 +  \sum_{j=1}^{\infty}b'_{j}x^{2j}) ( \sum_{k=1}^{\infty}b'_{k}k^{2}x^{2k-2}) 
 - \sum_{j,k\geq1}^{\infty}b'_j  b'_{k}jkx^{2(k+j)-2}\\[0.1cm] 
 &=&  b'_0  \sum_{k=1}^{\infty}b'_{k}k^{2}x^{2k-2} +  \sum_{j,k\geq 1}^{\infty}b'_{j}b'_k k^2 x^{2(k+j)-2} 
  - \sum_{j,k\geq1}^{\infty}b'_j  b'_{k}jkx^{2(k+j)-2}. 
\end{eqnarray*} 
Rewriting the second and third terms we have 
\begin{eqnarray*} 
 \sum_{j,k\geq 1}^{\infty}b'_{j}b'_k k^2 x^{2(k+j)-2}
   &=& \sum_{j>k\geq 1}^{\infty}b'_{j}b'_k (k^2+j^2)  x^{2(k+j)-2} + \sum_{k=1}^\infty (b'_k)^2 k^2 x^{4k-2}\\
\sum_{j,k\geq 1}^{\infty}b'_j  b'_{k}  jkx^{2(k+j)-2} 
 &=& 2  \sum_{j>k\geq1}^{\infty}b'_j  b'_{k}  jkx^{2(k+j)-2}     + \sum_{k=1}^\infty (b'_k)^2 k^2 x^{4k-2}. 
\end{eqnarray*} 
Putting together we have 
 \begin{eqnarray*}
 F_B^{\W}(z_{0},N)^{2}  K_{\W}(z_{0})^{2}  
 &=& b'_0  \sum_{k=1}^{\infty}b'_{k}k^{2}x^{2k-2} +  \sum_{j>k\geq 1}^{\infty}b'_{j}b'_k (k-j)^2 x^{2(k+j)-2}\\ 
 &=& \frac12 \sum_{j, k \geq 0}^\infty  b'_{j}b'_k (k-j)^2 x^{2(k+j)-2}. 
 \end{eqnarray*} 

We use the formula  \eqref{eq123} for Bergman metric on the unit ball provided in section~2 to have  $(F_B^{\B}(z_{0},N))^2=3/(1-x^{2})^{2}$. 
Since $K_{\Omega}(z_0,z_0) = \sum b'_j x^{2j}$ and $b_j' = \gamma_jb_j$   our claim of the proposition is 

\begin{equation} \label{eq:Fw1}
(F_B^{\W}(z_{0},N))^2= \frac{\sum_{j,k\ge0}\gamma_{k}\gamma_{j}b_{k}b_{j}x^{2(j+k-1)}(k-j)^{2}} {2\br{\sum_{j=0}^{\infty}\gamma_{j}b_{j}x^{2j}}^{2}}<\frac{3}{(1-x^{2})^{2}}. 
\end{equation}

We rewrite  (\ref{eq:Fw1})  as 

\begin{equation}
(1-2x^{2}+x^{4})\sum_{j,k\geq0}\gamma_{k} \gamma_{j}b_{k}b_{j}x^{2(j+k-1)}(k-j)^{2}-6 \biggl(\sum_{j=0}^{\infty}\gamma_{j}b_{j}x^{2j}\biggr)^{2} =\sum_{l\ge0}C_{l}x^{2l}<0
\end{equation}

We shall show that for $x$ close to $r$ and sufficiently
small  $r>0$, 
\begin{eqnarray*}
 &  & C_{0}<0\text{ and }C_{0}=O(r^{4}),\\
 &  & C_{1}<0\text{ and }C_{1}x^{2}=O(r^{8}),\\
 &  & C_{2}x^{4}=O(r^{8}).
\end{eqnarray*}
 For $l\geq3$, $C_{l}x^{2l}=O(r^{2l})=O(r^{6})$ for $l\geq3$. Hence,
the constant and $x^2$ terms dominate the higher order terms
and they are negative. Hence the claim follows. The rest of the proof
is explicit computation of coefficients.  We recall the formulas for $ b_j $ and $ \gamma_j $ given in  \eqref{BJ}: 
$b_j=(j+1)(j+2)/\pi^2,  b_{j}'=\gamma_{j}b_{j} $ and $ \gamma_{j}=1/(1-r^{4+2j})$. 

The constant term is:

\begin{eqnarray*}
C_{0} & = & 2\gamma_{0}b_{0}\gamma_{1}b_{1}-6\gamma_{0}^{2}b_{0}^{2}\\[0.2cm]
 & = & 2\gamma_{0}b_{0}\frac{1}{\pi^{2}} \biggl(\frac{6}{1-r^{6}}-\frac{6}{1-r^{4}}\biggr)\\[0.2cm]
 & = & -\frac{24}{\pi^{4}}\frac{r^{4}-r^{6}}{(1-r^{6})(1-r^{4})^2}.\end{eqnarray*}

The coefficient of the $x^2$ term is:

\begin{eqnarray*}
C_{1} & = & 2\gamma_{2}\gamma_{0}b_{2}b_{0}2^{2}-2(2\gamma_{1} \gamma_{0}b_{1}b_{0})-6\cdot2\gamma_{1}\gamma_{0}b_{1}b_{0}\\[0.2cm]
 & = & 8\gamma_{2}b_{2}\gamma_{0}b_{0}-16\gamma_{1} \gamma_{0}b_{1}b_{0}\\[0.2cm]
 & = & 8\gamma_{0}b_{0}(\gamma_{2}b_{2}-2\gamma_{1}b_{1})\\[0.2cm]
 & = & -\frac{192}{\pi^{4}}\frac{r^{6}(1-r^{2})}{(1-r^{4}) (1-r^{8})(1-r^{6})}.
\end{eqnarray*}

The coefficient of the $x^4$ term is:

 \begin{eqnarray*}
C_{2} & = & \sum_{j,k\geq0}^{j+k-1=2}\gamma_{k}\gamma_{j} b_{k}b_{j}x^{2(j+k-1)}(k-j)^{2}-2\sum_{j,k\geq0}^{j+k=2} \gamma_{k}\gamma_{j}b_{k}b_{j}x^{2(j+k)}(k-j)^{2}\\[0.2cm]
 &  & +\sum_{j,k\geq0}^{j+k+1=2}\gamma_{k}\gamma_{j} b_{k}b_{j}x^{2(j+k+1)}(k-j)^{2}-6\sum_{j,k\geq0}^{j+k=2} \gamma_{k}\gamma_{j}b_{k}b_{j}x^{2(j+k)}\\[0.2cm]
 & = & \frac{24r^{4}(3-2r^{2}-37r^{4}-66r^{6}-96r^{8}-69r^{10}-34r^{12}+r^{14})} {(1+r^{2})(1+r^{2}+r^{4}+r^{6}+r^{8})(-1+r^{6})(1+r^{2}+r^{4})(-1+r^{8})}.
\end{eqnarray*}

\end{proof}

\medskip
\noindent{\bf Proof of Theorem 3}\\
We may assume $z_0=(x,0)$, $x\in\R$ since the Bergman metric is invariant under biholomorphic mappings. Let $\xi=(\xi_1,\xi_2)=\xi_1N+\xi_2 T$. Then 

\[
(F_{B}^{\W}(z_{0},\xi))^{2}=\sum_{j,k=1}^{2}\frac{\p^{2}\log K_{\W}(z,z)}{\p z_{j}\p\cl z_{k}}\xi_{j}\cl\xi_{k}.
\]

Using equation \ref{KernelDerivative} and the fact that $ K_\W (z,z) = \sum a_{jk} |z_1|^{2j} |z_2|^{2k} $ we have 

\[
\frac{\p^{2}\log K_{\W}(z,z)}{\p z_{1}\p\cl z_{2}}\Bigg|_{(x,0)}=0.
\]

Hence 

\[ (F_{B}^{\W}(z_{0},\xi))^{2}=|\xi_{1}|^{2}(F_{B}^{\W}(z_{0},N))^{2} +|\xi_{2}|^{2}(F_{B}^{\W}(z_{0},T))^{2}.
\]

This equality also holds for the unit ball. Therefore, by Theorem  \ref{602}
and Proposition \ref{ringnormal}, we have that  $F_{B}^{\W}(z_{0},\xi)<F_{B}^{\B}(z_{0},\xi)$
assuming $r$ is small enough and $z_{0}$ is close to the inner boundary. \qed

\bigskip{}

\noindent Hyunsuk Kang\\
 Mathematics Department\\
 The University of Michigan\\
 East Hall, Ann Arbor, MI 48109\\
 USA\\
 hskang@umich.edu\\

\noindent Lina Lee\\
 Mathematics Department\\
 The University of Michigan\\
 East Hall, Ann Arbor, MI 48109\\
 USA\\
 linalee@umich.edu\\

\noindent Crystal Zeager\\
 Mathematics Department\\
 The University of Michigan\\
 East Hall, Ann Arbor, MI 48109\\
 USA\\
 zeagerc@umich.edu\\

\end{document}